\newcommand{\suchthat}{\;\ifnum\currentgrouptype=16 \middle\fi|\;}
\renewcommand{\leq}{\leqslant}
\renewcommand{\geq}{\geqslant}
\newcommand{\ds}{\displaystyle}
\numberwithin{equation}{section}
\newtheorem{thm}{Theorem}
\numberwithin{thm}{section}
\newaliascnt{lemma}{thm}
\newtheorem{lem}[lemma]{Lemma}
\newaliascnt{proposition}{thm}
\newaliascnt{corollary}{thm}
\newtheorem{corollary}[corollary]{Corollary}
\newaliascnt{definition}{thm}
\newaliascnt{remark}{thm}
\newtheorem{remark}[remark]{Remark}
\begin{document}

\title[Carleman estimate for an adjoint of a damped beam equation and an application]{Carleman estimate for an adjoint of a damped beam equation and an application to null controllability}

\date{\today}
\author{Sourav Mitra}
\thanks{{Acknowledgments}: The work was done as a part of the PHD thesis of the author in Institut de Math\'ematiques de Toulouse. The author wishes to thank the ANR project ANR-15-CE40-0010 IFSMACS as well as the Indo-French Centre for Applied Mathematics (IFCAM) for the funding provided during this work.}
\address{Sourav Mitra, Institute of Mathematics, University of Würzburg, 97074, Germany}
\email{sourav.mitra@mathematik.uni-wuerzburg.de,\,Tél: +49 931 31-89531,\,Fax: +49 931 31-80944}

\begin{abstract}
In this article we consider a control problem of a linear Euler-Bernoulli damped beam equation with potential in dimension one with periodic boundary conditions. We derive a new Carleman estimate for an adjoint of the equation under consideration. Then using a well known duality argument we obtain explicitly the control function which can be used to drive the solution trajectory of the control problem to zero state. 
\end{abstract}

\maketitle
\noindent{\bf{Key words}.} Euler-Bernoulli damped beam equation, potential, Carleman estimate, null controllability.
\smallskip\\
\noindent{\bf{AMS subject classifications}.} 35K41, 93B05, 93B07, 35E99, 74D99.  

\section{Introduction}
\subsection{Statement of the problem}
Let $d$ and $L$ are positive constants and ${\mathbb{T}^{d}_{L}}$ is the one dimensional torus identified with $(-L,d+L)$ with periodic conditions. In this article we consider the following control problem corresponding to the Euler-Bernoulli damped beam equation:
\begin{equation}\label{dampedbeam}
\left\{ \begin{array}{ll}
\displaystyle\partial_{tt}\beta-\partial_{txx}\beta+\partial_{xxxx}\beta+a\beta=v_{\beta}\chi_{\omega} \,& \mbox{in}\, {\mathbb{T}^{d}_{L}}\times(0,T),
\vspace{1.mm}\\
\displaystyle\beta(\cdot,0)=\beta_{0}\quad \mbox{and}\quad \partial_{t}\beta(\cdot,0)=\beta_{1}\, &\mbox{in}\, {\mathbb{T}^{d}_{L}},
\end{array}\right.
\end{equation}
  where $a=a(x,t)\in L^{\infty}(\mathbb{T}^{d}_{L}\times(0,T))$ is a potential,
  \begin{equation}\label{controlzone}
  \begin{array}{l}
  \omega=(-L,0)\cup(d,d+L)
  \end{array}
  \end{equation}
  and
 $\chi_{\omega}$ represents the characteristics function corresponding to the set $\omega.$ The set $\omega$ will correspond to the boundary control zone for the damped beam equation.\\ 
 The central theorem of the present article is based on proving a new Carleman estimate for the formal adjoint to the differential operator 
 \begin{equation}\label{formaloperator}
 \begin{array}{l}
 (\partial_{tt}-\partial_{txx}+\partial_{xxxx}),
 \end{array}
 \end{equation}
 with an observation on the set $\omega\times(0,T).$ As an application of this Carleman estimate we then construct a suitable boundary control function $v_{\beta}\chi_{\omega}$ such that the unknowns $\beta,$ the beam displacement and $\partial_{t}\beta,$ the beam velocity satisfy the following controllability requirement
 \begin{equation}\label{controlrequirement}
 \begin{array}{l}
 (\beta,\partial_{t}\beta)(\cdot,T)=0,
 \end{array}
 \end{equation}
 for some positive time $T>0.$ There is no restriction over the controllability time $T.$\\
 To state the central result of the present article we have to introduce some suitable weight functions.
 \subsection{Construction of the weight functions}\label{Conw8fn}
 Let $s (\geqslant 1)$ and $\lambda(\geqslant 1)$ be two positive parameters.\\ 
 $1.$ We first introduce a function $\eta$ on ${\mathbb{T}^{d}_{L}}$ such that 
 \begin{equation}\label{eta*}
 \begin{array}{l}
 \eta\in C^{6}({\mathbb{T}}_{L}),\,\, \eta(x)>0\,\,\mbox{in}\,\, {\mathbb{T}^{d}_{L}},
 \\
 \mbox{inf} \left\{|\nabla\eta(x)|
 \suchthat x\in{\mathbb{T}^{d}_{L}}
 \setminus\omega\right\}
 >0.
 \end{array}
 \end{equation}
 %
 $2.$ Next we will define a weight function in the time variable. Let  $T_{0}>0,$ $T_{1}>0,$ are such that
 \begin{equation}\label{relT01ep}
 \begin{split}
 2T_{0}+2T_{1}<T.
 \end{split}
 \end{equation}
 Now we choose a weight function $\theta(t) \in C^4(0,T)$ such that
 \begin{equation}\label{theta}
 \theta(t)=\left\{ \begin{array}{lll}
 \displaystyle
 &\displaystyle \frac{1}{t^{2}},\,\,&\forall\,\, t\in[0,T_{0}],\smallskip\\
 &\theta\, \mbox{is strictly decreasing}\,\,&\forall\,\, t\in[T_{0}, 2T_0],\\
 & 1\,\,&\forall\,\, t\in[2 T_{0},T-2T_{1}],\\
 & \theta\, \mbox{is strictly increasing}\,\,&\forall\,\, t\in[T-2T_{1},T-T_{1}],\\
 & \displaystyle \frac{1}{(T-t)^{2}},\,\,&\forall\,\, t\in[T-T_{1},T].
 \end{array}\right.
 \end{equation}
 Observe that $\theta(t)$ blows up at the terminal points $\{0\}$ and $\{T\}$ of the interval $(0,T).$\\
 $3.$ In view of $\eta$ and $\theta(t)$ we finally introduce the following weight functions in ${\mathbb{T}^{d}_{L}}\times[0,T],$
 \begin{equation}\label{w8fn}
 \left\{ \begin{array}{l}
 \displaystyle\phi(x,t)=\theta(t)(e^{6\lambda\|\eta\|_{\infty}}-e^{\lambda(\eta+4\|\eta\|_{\infty})}),\\
 {\xi}(x,t)=\theta(t)e^{\lambda(\eta+4\|\eta\|_{\infty})}.
 \end{array}\right.
 \end{equation}
 From now on until the end of this article we will denote by $c,$ a generic strictly positive small constant and by $C,$ a large constant, where both of them are independent of the parameters $s$ ($\geq 1$) and $\lambda$ ($\geq 1$).\\
 Note that the weight functions defined above closely relates with the weight functions used in proving Carleman estimate for adjoint heat equation. The choice that $\theta(t)$ equals one in a subinterval of $(0,T)$ is done to apply the Carleman estimate in studying the controllability of coupled PDE problems (especially parabolic hyperbolic coupling) in further works. For similar issues of controllability of coupled parabolic hyperbolic system one can consult the articles \cite{ervgugla} and \cite{ervbad}. One can also look into \cite[Chapter 4]{phdthesis} for the application of the Carleman estimate proved in this article to study the observability property of  a compressible fluid structure interaction problem. 
 \\
  Now let us state the following result corresponding to the Carleman estimate of the formal adjoint to the operator \eqref{formaloperator}.
 \begin{thm}\label{Carlbeamthm}
 There exist a constants  $C>0,$ $s_{0}\geqslant 1$ and $\lambda_{0}\geqslant 1$ such that for all smooth functions $\psi$ on ${\mathbb{T}^{d}_{L}}\times[0,T],$ for all $s\geqslant s_{0}$ and $\lambda\geqslant\lambda_{0},$
 	\begin{align}\label{crlestbmthm}
 	& \displaystyle s^{7}\lambda^{8}\int_0^T \int_{{\mathbb{T}^{d}_{L}}} {\xi}^{7}|{\psi}|^{2}e^{-2s\phi}+s^{5}\lambda^{6}\int_0^T \int_{{\mathbb{T}^{d}_{L}}} {\xi}^{5}|{\partial_{x}{\psi}}|^{2}e^{-2s\phi}\notag
 	\\
 	&\displaystyle+s^{3}\lambda^{4}\int_0^T \int_{{\mathbb{T}^{d}_{L}}} {\xi}^{3} ( |{\partial_{xx}{\psi}}|^{2}+ |{\partial_{t}{\psi}}|^{2}) e^{-2s\phi}+s\lambda^{2}\int_0^T \int_{{\mathbb{T}^{d}_{L}}} {\xi}( |{\partial_{tx}{\psi}}|^{2}+|{\partial_{xxx}{\psi}}|^{2})e^{-2s\phi}\notag \\
 	&+
 	\displaystyle\frac{1}{s}\int_0^T \int_{{\mathbb{T}^{d}_{L}}} \frac{1}{{\xi}}(|{\partial_{tt}{\psi}}|^{2}+|\partial_{txx}\psi|^{2}+|{\partial_{xxxx}{\psi}}|^{2})e^{-2s\phi}
 	\\
 	&
 	\displaystyle\leqslant  
 	C\int_0^T \int_{{\mathbb{T}^{d}_{L}}} |(\partial_{tt}+\partial_{txx}+\partial_{xxxx})\psi|^{2}e^{-2s\phi}
 	+Cs^{7}\lambda^{8}\int_0^T \int_{\omega} {\xi}^{7}|{\psi}|^{2}e^{-2s\phi},\notag
 	\end{align}
 	where the notation $\omega$ was introduced in \eqref{controlzone}.
 \end{thm}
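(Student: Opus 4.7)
The plan is to follow the Fursikov--Imanuvilov conjugation strategy, adapted to the fourth-order operator with the mixed third-order term $\partial_{txx}$. First I would substitute $z := e^{-s\phi}\psi$ and form the conjugated operator
\[
Pz := e^{-s\phi}(\partial_{tt}+\partial_{txx}+\partial_{xxxx})(e^{s\phi}z).
\]
Expanding out, each derivative of $\phi$ produces a tracked power of $\lambda$ and of $\xi$ via the identities $\partial_x\phi=-\lambda\xi\,\partial_x\eta$, $\partial_{xx}\phi=-\lambda\xi(\lambda(\partial_x\eta)^2+\partial_{xx}\eta)$, and analogous formulas for time derivatives expressed through $\theta'$. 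In the middle region $[2T_0,T-2T_1]$ one has $\theta'\equiv 0$ so no temporal weight contributions arise there, while on the two blow-up intervals the bound $|\theta'|\leq C\theta^{3/2}$ ensures $|\partial_t\phi|\leq C\xi^{3/2}$ up to a $\lambda$-dependent constant.

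Next I would decompose $Pz = P_1 z + P_2 z + Rz$ by grouping into $P_1$ the terms with an even total number of derivatives falling on $z$ and into $P_2$ the terms of odd order; $Rz$ collects lower-order pieces absorbable for $s$ and $\lambda$ large. This yields the standard identity
\[
2(P_1z,P_2z)_{L^2} \leq \|Pz\|^2 + \|Rz\|^2,
\]
and the core task becomes bounding $(P_1z,P_2z)_{L^2}$ from below by the left-hand side of \eqref{crlestbmthm}. I would expand this cross term by repeated integration by parts in $x$ and $t$; because of the periodic boundary on $\mathbb{T}^d_L$ and the infinite-order vanishing of $e^{-2s\phi}$ as $t\to 0,T$, no boundary contributions appear, which is the main simplification over a boundary-value formulation of the same estimate.

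Each commutator of $P_1$ and $P_2$ carries either a factor $\lambda\xi\,\partial_x\eta$ or $\theta'/\theta$, and after careful regrouping the dominant positive contribution matches term-by-term the integrand on the left-hand side of \eqref{crlestbmthm}, weighted pointwise by powers of $|\partial_x\eta|$. By \eqref{eta*} the resulting quadratic form is coercive on $\mathbb{T}^d_L\setminus\omega$. To upgrade to the full domain I would invoke the standard cutoff argument: add the missing integrals over $\omega$ to both sides, then use a smooth spatial cutoff supported in $\omega$ and integration by parts to dominate all local derivative terms by $s^7\lambda^8\int_0^T\int_\omega\xi^7|z|^2$, which when translated back to $\psi = e^{s\phi}z$ yields exactly the observation integral on the right-hand side of \eqref{crlestbmthm}. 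The high-order $\psi$-derivatives on the left are recovered from the corresponding $z$-derivatives via the same conjugation identities, modulo already-controlled lower-order terms.

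The main obstacle will be the mixed third-order term $\partial_{txx}$: it breaks the clean even/odd splitting familiar from second-order parabolic and wave Carleman estimates, because its conjugated expansion feeds terms into both $P_1$ and $P_2$ simultaneously, and the new cross terms with $\partial_{tt}$ and $\partial_{xxxx}$ must be regrouped by hand so that the emerging quadratic form has the sign-definite structure with the exact $s$- and $\lambda$-exponents claimed in \eqref{crlestbmthm}. A secondary technical nuisance is the careful bookkeeping of the powers of $\theta'$ versus $\xi$ across the three time regions defined by \eqref{theta}, particularly on the blow-up intervals $[0,T_0]$ and $[T-T_1,T]$, where the inequality $|\theta'|\leq C\theta^{3/2}$ is essential to absorb the time-weight contributions into the spatial weight $\xi$.
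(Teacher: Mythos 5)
Your overall architecture (conjugation $w=e^{-s\phi}\psi$, splitting $P_\phi w=P_1w+P_2w+Rw$, lower bound on the cross product, then localization of the observation by cutoffs) is the same as the paper's, but there is a genuine gap at the heart of the argument: the ``even/odd number of derivatives'' splitting you propose does \emph{not} produce a sign-definite cross product for this operator. If you carry out that naive splitting, the coefficient of $s^{3}\int\!\!\int(\partial_{x}\phi)^{2}\partial_{xx}\phi(\partial_{t}w)^{2}$ in $\int\!\!\int P_{1}wP_{2}w$ comes out \emph{positive} (it is $+3$, from the sum of the contributions of $\partial_{tt}w$ against $s^{2}(\partial_x\phi)^2\partial_t w$ and of $2s\partial_x\phi\partial_{xt}w$ against $4s^{3}(\partial_x\phi)^3\partial_x w$), and since $\partial_{xx}\phi<0$ away from $\omega$ this yields a \emph{negative} contribution of size $s^{3}\lambda^{4}\xi^{3}(\partial_t w)^2$ that cannot be absorbed: it is exactly of the order of a term you need on the left of \eqref{crlestbmthm}. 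The paper's fix — and its main new idea — is to place a tunable zeroth-order term $6(1+\zeta)s^{3}(\partial_x\phi)^{2}\partial_{xx}\phi\, w$ inside $P_{2}$ (see \eqref{p12r}), which shifts that coefficient to $-3-6\zeta$ while shifting the others to $-8+6\zeta$, $-66-36\zeta$, $-12+6\zeta$; one must then verify that a single $\zeta$ (the paper takes $\zeta=1$) makes all four negative \emph{and} leaves enough room for the Young-inequality absorption of the two cross terms $E_7,E_8$ in \eqref{estsum}. You acknowledge that the terms ``must be regrouped by hand,'' but you do not supply the device that makes the regrouping work, and without it the proof fails at the $(\partial_t w)^2$ block.

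A second, smaller gap: the top-order terms $\frac{1}{s}\int\!\!\int\xi^{-1}(|\partial_{tt}\psi|^{2}+|\partial_{txx}\psi|^{2}+|\partial_{xxxx}\psi|^{2})e^{-2s\phi}$ on the left of \eqref{crlestbmthm} are not ``recovered from the conjugation identities modulo controlled lower-order terms.'' They do not come out of the cross product at all; the paper obtains them by a separate maximal parabolic regularity argument applied to the rescaled unknown $\tau=w/\sqrt{s\xi}$, which solves the damped beam equation \eqref{adjbeam*} with a right-hand side already controlled by the first stage of the estimate (see \eqref{hoddr}). You need some substitute for this step (e.g.\ retaining $\|P_1w\|_{L^2}^2$ on the left and extracting the top-order derivatives from it), and your write-up provides none. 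The remaining ingredients you describe — the weight bounds $|\partial_t\phi|\leqslant C\xi^{3/2}$, the absence of boundary terms on the torus, and the iterated cutoff argument reducing the local observation to $s^{7}\lambda^{8}\int_0^T\int_{\omega}\xi^{7}|\psi|^{2}e^{-2s\phi}$ — do match the paper's proof.
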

 The next theorem corresponds to a null controllability result for a damped beam equation with potential which is derived as an application of the Theorem \ref{Carlbeamthm}. We will use in particular a Corollary \ref{corpotential} of Theorem \ref{Carlbeamthm} to prove the following result.
 \begin{thm}\label{Centraltheoremnullcont}
 	Let $T>0,$ $a\in L^{\infty}(\mathbb{T}^{d}_{L}\times(0,T))$ be a potential and the initial datum satisfy the following regularity assumptions:
 	\begin{equation}\label{regularityinitial}
 	\begin{array}{l}
 	\beta_{0}\in H^{3}({\mathbb{T}^{d}_{L}}),\quad\mbox{and}\quad \beta_{1}\in H^{1}({\mathbb{T}^{d}_{L}}).
 	\end{array}
 	\end{equation}
 	There exists a control $v_{\beta}\in L^{2}(0,T;L^{2}({\mathbb{T}^{d}_{L}})),$ such that the solution to the system \eqref{dampedbeam}
 	 satisfies the null controllability requirement \eqref{controlrequirement} and the controlled trajectory $\beta$ has the following regularity
 	\begin{equation}\label{regularitytrajectory}
 	\begin{array}{l}
 	\beta\in L^{2}(0,T;H^{4}({\mathbb{T}^{d}_{L}}))\cap H^{1}(0,T;H^{2}({\mathbb{T}^{d}_{L}}))\cap H^{2}(0,T;L^{2}({\mathbb{T}^{d}_{L}})).
 	\end{array}
 	\end{equation}
 \end{thm}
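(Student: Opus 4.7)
The plan is to combine Corollary \ref{corpotential}, which extends Theorem \ref{Carlbeamthm} to absorb the $L^\infty$-potential $a\psi$ on the left-hand side, with a Fursikov-Imanuvilov type duality argument, and then to upgrade the resulting weak control into one producing the regularity \eqref{regularitytrajectory}.

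First, I would identify the formal adjoint of \eqref{dampedbeam}: integrating by parts in time and in space on ${\mathbb{T}^{d}_{L}}\times(0,T)$ (the periodic boundary killing all spatial boundary terms), the operator $\partial_{tt}-\partial_{txx}+\partial_{xxxx}+a$ turns into $\partial_{tt}+\partial_{txx}+\partial_{xxxx}+a$, which is precisely the operator to which Corollary \ref{corpotential} provides a Carleman inequality. The computation yields a duality identity of the form
\begin{equation*}
\int_0^T\!\!\int_\omega v_\beta\,\psi\,dx\,dt=\mathcal B(T)-\mathcal B(0),
\end{equation*}
where $\mathcal B(t)$ is the bilinear boundary pairing collecting the traces of $(\beta,\partial_t\beta,\partial_{xx}\beta,\partial_{txx}\beta)$ at time $t$ against the corresponding traces of $\psi$. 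Imposing $(\beta,\partial_t\beta)(T)=0$ makes $\mathcal B(T)$ vanish, so constructing the desired null control is equivalent to representing the linear form $\psi\mapsto-\mathcal B(0;\psi)$ via the observation $\psi|_{\omega\times(0,T)}$; the continuity of this form on a suitable energy space is ensured by \eqref{regularityinitial}.

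Next, I would realise this representation by the weighted Lax-Milgram method. Setting $\rho_0:=s^{-7/2}\lambda^{-4}\xi^{-7/2}e^{s\phi}$ on $\omega\times(0,T)$ and choosing $\rho_1$ on ${\mathbb{T}^{d}_{L}}\times(0,T)$ compatibly with the extreme term on the right-hand side of \eqref{crlestbmthm}, Corollary \ref{corpotential} translates into coercivity of the bilinear form
\begin{equation*}
B(\psi,\widetilde\psi):=\int_0^T\!\!\int_{{\mathbb{T}^{d}_{L}}}\rho_1^{-2}\,L^*\psi\cdot L^*\widetilde\psi+\int_0^T\!\!\int_\omega \rho_0^{-2}\,\psi\,\widetilde\psi,
\end{equation*}
with $L^*:=\partial_{tt}+\partial_{txx}+\partial_{xxxx}+a$, on the Hilbert completion $X$ of smooth functions in the norm $B(\cdot,\cdot)^{1/2}$. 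Lax-Milgram then produces a unique $\widehat\psi\in X$ satisfying $B(\widehat\psi,\widetilde\psi)=-\mathcal B(0;\widetilde\psi)$ for every smooth $\widetilde\psi$. Setting $\beta:=\rho_1^{-2}L^*\widehat\psi$ and $v_\beta:=-\rho_0^{-2}\widehat\psi\chi_\omega$, the Euler-Lagrange relation shows that $(\beta,v_\beta)$ solves \eqref{dampedbeam} together with $(\beta,\partial_t\beta)(T)=0$ in the distributional sense.

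Finally, I would upgrade the regularity of $\beta$ to \eqref{regularitytrajectory}. The left-hand side of \eqref{crlestbmthm} gives weighted integrability of $\widehat\psi$ and of its derivatives up to order four in $x$ and order two in $t$, which, via the formula $\beta=\rho_1^{-2}L^*\widehat\psi$, converts into weighted integrability of $\beta$; since $\theta\equiv 1$ on $[2T_0,T-2T_1]$, the weights are bounded on this subinterval and yield unweighted $L^2(H^4)\cap H^1(H^2)\cap H^2(L^2)$ regularity in the interior. A short-time energy estimate for the forward problem \eqref{dampedbeam}, initialised with \eqref{regularityinitial} and driven by the already-constructed $v_\beta\in L^2(\omega\times(0,T))$, propagates the required regularity on the initial layer $[0,2T_0]$; and the blow-up of $\phi$ like $1/(T-t)^2$ near $t=T$, encoded in \eqref{theta}\eqref{w8fn}, forces $\beta$ and $\partial_t\beta$ to vanish at $t=T$ in a trace sense strong enough to give \eqref{controlrequirement}. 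The main obstacle will be this regularity-matching step: one must simultaneously extract $v_\beta\in L^2(\omega\times(0,T))$ (and not merely in a weighted space) and patch the three subintervals so that the global regularity \eqref{regularitytrajectory} holds end-to-end.
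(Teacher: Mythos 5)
There is a genuine gap in your second step, and it is exactly the difficulty that the paper's proof is designed to circumvent. You propose to represent the initial data through the linear form $\widetilde\psi\mapsto-\mathcal B(0;\widetilde\psi)$, which collects the traces of $\widetilde\psi$, $\partial_t\widetilde\psi$, $\partial_{xx}\widetilde\psi$ at $t=0$, and you assert that its continuity on the energy space $X$ ``is ensured by \eqref{regularityinitial}.'' It is not: the norm induced by your bilinear form $B$ (equivalently $\|\cdot\|_{obs}$ in the paper) only controls quantities weighted by $e^{-2s\phi}$, and since $\phi\sim\theta(t)\sim 1/t^{2}$ blows up as $t\to 0^{+}$ by \eqref{theta} and \eqref{w8fn}, the weight $e^{-2s\phi}$ vanishes to infinite order at $t=0$. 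The Carleman estimate \eqref{crlestbmcor} therefore gives no information whatsoever about the trace of $\widetilde\psi$ at $t=0$, and no choice of $\rho_0,\rho_1$ compatible with \eqref{crlestbmthm} makes $\mathcal B(0;\cdot)$ continuous on $X$. The same obstruction shows up structurally in your ansatz $\beta=\rho_1^{-2}L^{*}\widehat\psi$: the factor $e^{-2s\phi}$ hidden in $\rho_1^{-2}$ forces this $\beta$ to vanish at $t=0$, which is incompatible with attaining nonzero data $(\beta_0,\beta_1)$. Fixing this would require an additional energy/hidden-regularity estimate propagating information from the interior of $(0,T)$ back to $t=0$ for arbitrary $\psi$ (not just adjoint solutions), which you neither state nor prove.

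The paper avoids this entirely by the cut-off decomposition \eqref{decomposebeta}: it writes $\beta=\theta_1(t)q+g$, where $q$ solves the free equation \eqref{dampedbeamq} with the given initial data and $g$ solves the controlled problem \eqref{dampedbeamw} with \emph{homogeneous} initial data and source $f_{\theta_1,q}$ given by \eqref{fbetaq}. Since $\theta_1'$ and $\theta_1''$ are supported away from $t=0$ and $t=T$, the source satisfies $\int_0^T\int_{\mathbb{T}^{d}_{L}}|f_{\theta_1,q}|^{2}e^{2s\phi}<\infty$ (estimate \eqref{boundftheta1q}), so the corresponding linear term in the functional \eqref{thefunctioanl} \emph{is} continuous for $\|\cdot\|_{obs}$ by Cauchy--Schwarz and \eqref{crlestbmcor}; the minimization then goes through. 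Your final regularity-patching step is also more complicated than necessary: once the control bound \eqref{finalboundcontrol} is in hand, the regularity \eqref{regularitytrajectory} follows in one stroke from the well-posedness Lemma \ref{existencepotential}, with no need to glue weighted interior estimates to a short-time layer. I would recommend you adopt the cut-off reduction; without it or a substitute trace estimate at $t=0$, the Lax--Milgram step does not close.
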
 
 The equation \eqref{dampedbeam}$_{1}$ we consider is of parabolic nature. In other words the operator 
 \begin{equation}\label{operatorA}
 \begin{array}{l}
 \mathcal{A}=\begin{pmatrix}
 0 & I\\
 -\partial_{xxxx} & \partial_{xx}
 \end{pmatrix}
 \end{array}
 \end{equation}
(the operator $\mathcal{A}$ is without potential) defined in $H^{2}({\mathbb{T}^{d}_{L}})\times L^{2}({\mathbb{T}^{d}_{L}})$ with the domain
 $$\mathcal{D}(\mathcal{A})=H^{4}({\mathbb{T}^{d}_{L}})\times H^{2}({\mathbb{T}^{d}_{L}}),$$ is the generator of an analytic semigroup. For details we refer the readers to \cite{chen}. The well posedness of the system \eqref{dampedbeam} with $a=0$ is well studied in the literature and we will comment more on that afterwards. In our case since the system \eqref{dampedbeam} is with potential, we state the following result for the well posedness and regularity of system \eqref{dampedbeam}.
	\begin{lem}\label{existencepotential}
		Let, $a=a(x,t)\in L^{\infty}(\mathbb{T}^{d}_{L}\times(0,{T}))$ be a potential. Let $$(\beta_{0},\beta_{1})\in H^{3}({\mathbb{T}^{d}_{L}})\times H^{1}({\mathbb{T}^{d}_{L}}),$$
		and the control function, $v_{\beta}\in L^{2}(0,T;\mathbb{T}^{d}_{L})$ satisfies
		\begin{equation}\label{boundvbeta}
		\begin{array}{l}
		\|v_{{\beta}}\|_{L^{2}(0,T;L^{2}({\mathbb{T}^{d}_{L}}))}\leqslant C\|(\beta_{0},\beta_{1})\|_{H^{3}({\mathbb{T}^{d}_{L}})\times H^{1}({\mathbb{T}^{d}_{L}})},
		\end{array}
		\end{equation}
		for some positive constant $C.$ Then the system \eqref{dampedbeam}
		admits a unique solution in the functional framework \eqref{regularitytrajectory}.
		Besides, there exists a positive constant $C,$  such that the following holds
		\begin{equation}\label{inequalityexistence2}
		\begin{array}{l}
		\displaystyle\|(\beta,\partial_{t}\beta)\|_{L^{2}(0,{T};H^{4}({\mathbb{T}^{d}_{L}})\times H^{2}({\mathbb{T}^{d}_{L}}))\cap H^{1}(0,{T};H^{2}({\mathbb{T}^{d}_{L}})\times L^{2}({\mathbb{T}^{d}_{L}}))}\leqslant\displaystyle C\|(\beta_{0},\beta_{1})\|_{H^{3}({\mathbb{T}^{d}_{L}})\times H^{1}({\mathbb{T}^{d}_{L}})}.
		\end{array}
		\end{equation}
	\end{lem}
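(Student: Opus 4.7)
The plan is to recast \eqref{dampedbeam} as a first-order abstract Cauchy problem in the state $Y:=(\beta,\partial_{t}\beta)^{T}$ and apply maximal $L^{2}$-regularity for the analytic semigroup generated by $\mathcal{A}$, treating the potential as a lower order perturbation. Setting $Y_{0}:=(\beta_{0},\beta_{1})^{T}$ and $F(t):=(0,\,-a\beta+v_{\beta}\chi_{\omega})^{T}$, system \eqref{dampedbeam} is equivalent to $\partial_{t}Y=\mathcal{A}Y+F$ with $Y(0)=Y_{0}$, and $F$ takes values in $H^{2}({\mathbb{T}^{d}_{L}})\times L^{2}({\mathbb{T}^{d}_{L}})$ whenever $\beta\in L^{2}(0,T;L^{2}({\mathbb{T}^{d}_{L}}))$.

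Since $\mathcal{A}$ generates an analytic semigroup on $H^{2}({\mathbb{T}^{d}_{L}})\times L^{2}({\mathbb{T}^{d}_{L}})$ with domain $\mathcal{D}(\mathcal{A})=H^{4}({\mathbb{T}^{d}_{L}})\times H^{2}({\mathbb{T}^{d}_{L}})$ (see \cite{chen}), de Simon's theorem delivers maximal $L^{2}$-regularity: for any $G\in L^{2}(0,T;H^{2}\times L^{2})$ and any $Z_{0}$ lying in the real interpolation space $(H^{2}\times L^{2},\mathcal{D}(\mathcal{A}))_{1/2,2}$, the linear problem $\partial_{t}Z=\mathcal{A}Z+G$, $Z(0)=Z_{0}$ admits a unique solution in $L^{2}(0,T;\mathcal{D}(\mathcal{A}))\cap H^{1}(0,T;H^{2}\times L^{2})$ together with the corresponding linear estimate. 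Standard Sobolev interpolation on the torus gives
\[(H^{2}({\mathbb{T}^{d}_{L}})\times L^{2}({\mathbb{T}^{d}_{L}}),\,H^{4}({\mathbb{T}^{d}_{L}})\times H^{2}({\mathbb{T}^{d}_{L}}))_{1/2,2}=H^{3}({\mathbb{T}^{d}_{L}})\times H^{1}({\mathbb{T}^{d}_{L}}),\]
so by \eqref{regularityinitial} the datum $Y_{0}$ lands precisely in this trace space.

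To absorb the potential I would run a contraction argument on $X_{T}:=L^{2}(0,T;L^{2}({\mathbb{T}^{d}_{L}}))$. Given $\widetilde{\beta}\in X_{T}$, define $\Lambda(\widetilde{\beta}):=\beta$ as the solution produced by the previous step with source $-a\widetilde{\beta}+v_{\beta}\chi_{\omega}\in L^{2}(0,T;L^{2})$; this $\beta$ automatically has the regularity announced in \eqref{regularitytrajectory}, and
\[\|\Lambda(\widetilde{\beta})\|_{X_{T}}\leqslant C_{T}\bigl(\|a\|_{L^{\infty}}\|\widetilde{\beta}\|_{X_{T}}+\|v_{\beta}\|_{L^{2}(L^{2})}+\|(\beta_{0},\beta_{1})\|_{H^{3}\times H^{1}}\bigr).\]
On a short enough interval $C_{T}\|a\|_{L^{\infty}}<1$, so $\Lambda$ is a strict contraction on $X_{T}$ and Banach's fixed-point theorem yields a local solution; since every estimate is linear no blow-up is possible, and one iterates on successive small intervals to cover $[0,T]$. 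Uniqueness on $[0,T]$ follows from the same contraction applied to the difference of two solutions.

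The estimate \eqref{inequalityexistence2} is then obtained by applying the maximal regularity inequality on all of $[0,T]$, transferring the $\|a\beta\|_{L^{2}(L^{2})}$ contribution to the left-hand side through a Gronwall argument, and finally using \eqref{boundvbeta} to convert $\|v_{\beta}\|_{L^{2}(L^{2})}$ into a constant times $\|(\beta_{0},\beta_{1})\|_{H^{3}\times H^{1}}$; the $H^{2}(0,T;L^{2})$ regularity of $\beta$ is automatic from $\partial_{t}\beta\in H^{1}(0,T;L^{2})$. The main technical point is the identification of the trace space and the verification of maximal $L^{2}$-regularity for the specific operator $\mathcal{A}$; a self-contained alternative would proceed by direct energy estimates, successively testing the equation against $\partial_{t}\beta$, $\partial_{xxxx}\beta$ and $\partial_{tt}\beta$ and controlling the cross terms generated by the damping $-\partial_{txx}\beta$, but this route is considerably more computational.
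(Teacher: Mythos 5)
Your proposal is correct and follows essentially the same route as the paper: maximal $L^{2}$-regularity for the analytic semigroup generated by $\mathcal{A}$ (the paper invokes an isomorphism theorem rather than de Simon's theorem, with the same trace space $H^{3}\times H^{1}$), followed by a Banach fixed-point argument on a short time interval and a finite iteration to cover $[0,T]$. The only point you should make explicit is why your contraction constant $C_{T}$ shrinks as $T\to 0$: the maximal-regularity constant itself does not, and the gain comes from an $L^{\infty}(0,T;L^{2})$-in-time bound on the solution combined with $\|\beta\|_{L^{2}(0,T;L^{2})}\leqslant T^{1/2}\|\beta\|_{L^{\infty}(0,T;L^{2})}$ --- precisely the extra estimate the paper carries along in its linear lemma and justifies in the subsequent remark.
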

We will recall the proof of Lemma \ref{existencepotential} in Section \ref{appendix}.\\
To the best of our knowledge our article is the first one proving a Carleman estimate for the adjoint of the damped beam equation \eqref{dampedbeam}. The null controllability problem \eqref{dampedbeam} without the potential term is already studied in the articles \cite{lasieckadampedbeam}, \cite{millerbeam} and \cite{julienedward} using spectral methods. Their technique is completely different from ours which is based in proving a Carleman estimate for the adjoint to the system \eqref{dampedbeam}.
 In \cite{lasieckadampedbeam} the authors consider a more general controllability problem:
 $$\displaystyle w_{tt} + Sw + \rho S^{\alpha}w_{t} = u;\quad w(0) = w_0;\quad w_t (0) = w_1;\quad \rho> 0;\quad 1/2 \leqslant \alpha \leqslant 1,$$
 where for some Hilbert space $\mathcal{X},$ $S:\mathcal{D}(S)(\subset \mathcal{X})\longrightarrow \mathcal{X}$ is a positive, self-adjoint, unbounded operator with compact resolvent. The control $u$ is not localized and is assumed to be distributed over the whole domain. In \cite{julienedward} the authors consider a one dimensional damped beam (similar to the one \eqref{dampedbeam}$_{1}$ but without the potential) with hinged ends and with a positive parameter $\rho$ appearing as the coefficient of $\partial_{txx}\beta.$ They study the null controllability of the system with a localized interior control by proving an observability inequality uniformly with respect to $\rho.$ The approach of both the articles \cite{lasieckadampedbeam} and \cite{julienedward} is based on proving an observability estimate by using Fourier decomposition and suitably using Bassel's inequality and Ingham-type inequalities for complex frequencies. In \cite{millerbeam} the author explicitly obtains the cost of the control as $T\longrightarrow 0,$ by tracking the constants in the observability estimate using spectral methods.\\
 The main focus of the present article is to derive a new Carleman estimate for the dual to the problem \eqref{dampedbeam}. Then using a duality argument we prove the null controllability of the primal problem \eqref{dampedbeam}. The duality argument used in this article is motivated from \cite{farnan} and \cite{ervbad}. In fact the concept of duality between controllability and observability dates back to the celebrated Hilbert Uniqueness method (HUM), introduced in the article \cite{JLLionscontexact}, which reduces the question of exact controllability problem of a partial differential equation into proving the observability estimate of the corresponding adjoint problem.\\
 The Carleman estimate obtained in this article can be used to prove controllability results corresponding to more complicated coupled dynamical systems, like the ones considered in \cite{raymondbeam} and \cite{Mitraexistence}.  In fact the two main advantages of using Carleman estimate in studying the controllability properties of a PDE are:
 $(i)\,$ Using suitably large Carleman parameter $s$ one can readily incorporate lower order terms especially a $L^{\infty}$ potential in a linear PDE model to study the controllability. Whereas spectral methods can not be applied in analyzing the controllability properties of a linear PDE model with potential (or with other lower order terms). This specific advantage of Carleman estimate is often exploited to deal with the controllability issues of semi linear PDE models.\\
 $(ii)\,$ Moreover, to track the behavior of the  spectrum of coupled PDE models is often very complicated. For example of such models one can have a look in the fluid structure interaction problems (with an elastic structure at the boundary) considered in \cite[Chapter 4]{phdthesis} and \cite{raymondbeam}. Carleman estimate can prove to be a very useful tool for studying controllability issues of such coupled problems. In connection with this discussion we would like to refer to \cite[Chapter 4]{phdthesis} where the author obtains an observability inequality for a compressible fluid structure interaction problem using Carleman estimates for some decoupled equations.\\ [2.mm]
 Let us briefly discuss the strategy of the present article in the following.
 \subsection{Comments on the Strategy} 
 (i)\,\,$\mathit{The\,\, Carleman\,\, estimate}$: The proof of the controllability result Theorem \ref{Centraltheoremnullcont} relies on studying the observability of the corresponding adjoint system. This observability is the consequence of the Carleman estimate stated in Theorem \ref{Carlbeamthm}, more specifically Corollary \ref{corpotential}.  In fact we prove a Carleman estimate for all smooth functions defined in ${\mathbb{T}^{d}_{L}}\times[0,T]$ with an observation in the set $\omega\times(0,T).$ Roughly speaking the Carleman estimate is a way to bound the weighted energy of a PDE system by just using the energy localized on the observation set $\omega\times(0,T).$ Thanks to the parabolic nature of the equation \eqref{dampedbeam}, we are able to prove a Carleman estimate by using similar weight functions which are used in the literature in deriving Carleman estimate for heat equation (for instance one can look into the articles \cite{fursikov}, \cite{farnan} and \cite{ervbad}). Unlike the heat equation in our case the damped beam equation consists of second order derivative in time and fourth order derivative in space and this makes the proof of the Carleman estimate very involved and tricky. The weight function $\phi(x,t)$ we use roughly equals to $\theta(t)e^{6\lambda\|\eta\|_{\infty}},$ where the weight $\theta(t)$ in time blows up at terminal points $\{0\}$ and $\{T\},$ $\lambda$ is a positive parameter and $\eta(x)$ is sufficiently smooth positive valued function defined on ${\mathbb{T}^{d}_{L}}$ with all its critical points in the set $\omega.$\\
 Now to derive a Carleman estimate solved by a smooth function $\psi,$ the trick is to perform a change of unknown $w=e^{-s\phi}\psi,$ and introduce a new quantity
 $$P_{\phi}w=e^{-s\phi}({\partial_{tt}{\psi}}+{\partial_{txx}{\psi}}+{\partial_{xxxx}{\psi}})=e^{-s\phi}(\partial_{tt}(e^{s\phi}w)+\partial_{txx}(e^{s\phi}w)+\partial_{xxxx}(e^{s\phi}w)).$$ 
 Next the most important part of the analysis is to suitably decompose $P_{\phi}w$ as
 $$P_{\phi}w=P_{1}w+P_{2}w+\mathit{R}w,$$
 where $P_{1}$ and $P_{2}$ roughly corresponds to the formally computed symmetric and anti symmetric part of the operator $P_{\phi},$ whereas $\mathit{R}$ corresponds to the lower order terms. We have managed to incorporate a lower order term in the expression of $P_{2}w$ and show that the product term $\displaystyle\int_0^T \int_{{\mathbb{T}^{d}_{L}}} P_{1}wP_{2}w$ admits of positive coefficients except possibly on the observation set $\omega\times(0,T).$ This in turn is used to prove the claimed Carleman estimate. Consequently one can easily obtain a Carleman estimate of the adjoint of a damped beam equation with potential. For details we refer the readers to Corollary \ref{corpotential}. \\
 An alternative way to obtain a Carleman estimate corresponding to the operator $(\partial_{tt}+\partial_{txx}+\partial_{xxxx})$ (which is without the potential term) is to factorize the adjoint operator as follows:
 $$(\partial_{tt}+\partial_{txx}+\partial_{xxxx})\psi=(\partial_{t}+\frac{1\pm \sqrt{3}i}{2}\partial_{xx})(\partial_{t}+\frac{1\mp \sqrt{3}i}{2}\partial_{xx})\psi,$$
 and then use the Carleman estimate for parabolic equations with complex coefficients, for instance one can use the result form \cite{Xiaoyufu}. But in that case we can only obtain bound over $s^{6}\lambda^{8}\displaystyle\int_0^T \int_{{\mathbb{T}^{d}_{L}}}\xi^{6}|\psi|^{2}e^{-2s\phi},$ where $s$ and $\lambda$ are Carleman parameters and $\xi=\theta(t)e^{\lambda(\eta(x,t)+4\|\eta\|_{\infty})},$ but this result is not optimal. On the other hand the Carleman estimate stated in Theorem \ref{Carlbeamthm} derives a bound on $s^{7}\lambda^{8}\displaystyle\int_0^T \int_{{\mathbb{T}^{d}_{L}}}\xi^{7}|\psi|^{2}e^{-2s\phi},$ which seems to be optimal in the sense that the exponents of the parameters $s$ and $\lambda$ can not be improved. The optimality of exponents of Carleman parameters can play a crucial role while dealing with coupled PDE systems with strong coupling. One can for instance look into \cite[Chapter 4]{phdthesis} where a bound over $s^{6}\lambda^{8}\displaystyle\int_0^T \int_{{\mathbb{T}^{d}_{L}}}\xi^{6}|\psi|^{2}e^{-2s\phi},$ is not enough and one needs a bound over $s^{7}\lambda^{8}\displaystyle\int_0^T \int_{{\mathbb{T}^{d}_{L}}}\xi^{6}|\psi|^{2}e^{-2s\phi},$ to prove the observability properties of a compressible fluid structure interaction problem where a damped beam of the form \eqref{dampedbeam} appears at the fluid boundary. \\
 (ii)\,\,$\mathit{Null-controllability\,\,of\,\,\eqref{dampedbeam}}$: Next in Section \ref{Nullcontrollability} we prove Theorem \ref{Centraltheoremnullcont} by a duality argument. In fact we introduce a cutoff function in time and using this we reduce the control problem \eqref{dampedbeam}-\eqref{controlrequirement} into a homogeneous initial value null controllability problem. To prove the null controllability of the new problem we write it in a weak form and introduce a functional whose Euler-Lagrange equation coincides with the obtained weak formulation. This strategy is inspired from \cite{fursikov}, \cite{farnan} and \cite{ervbad} where the authors treat the null controllability problem of heat type equations using this technique. Then thanks to the Carleman estimates derived in Section \ref{Carlemanestimate}, we show that the functional admits of a unique minimizer in a suitable Hilbert space. This minimizer is eventually used to obtain an explicit expression of a control function and an expression of the controlled trajectory. We further obtain an estimate on the $L^{2}({\mathbb{T}^{d}_{L}}\times(0,T))$ norm of the control function which is eventually used to show that the controlled trajectory satisfies the regularity \eqref{regularitytrajectory} as a consequence of Lemma \ref{existencepotential}.\\
 Since we are considering a one dimensional beam with periodic boundary conditions one may use spectral methods to prove the null controllability of the system \eqref{dampedbeam} when the potential $a=0$. For instance taking the Fourier transform of \eqref{dampedbeam}$_{1}$ with potential $a=0$ it is not hard to compute the following expression of eigenvalues and eigenfunctions corresponding to the operator $\mathcal{A}$ (given by \eqref{operatorA}):
 \begin{equation}\label{eigenvaluevector}
 \begin{array}{l}
 \displaystyle\lambda_{k}=\frac{-k^{2}\pm\sqrt{3}ik^{2}}{2},\quad \delta_{k}=\begin{pmatrix}
 e^{ikx}\\
 \lambda_{k}e^{ikx}
 \end{pmatrix},\quad\mbox{for\,\,all}\quad k\in \mathbb{Z}.
 \end{array}
 \end{equation}   
 It can be checked that $\mathcal{A}^{*},$ the adjoint of $\mathcal{A}$ with $a=0$ computed in the inner product of $L^{2}({\mathbb{T}^{d}_{L}})\times L^{2}({\mathbb{T}^{d}_{L}}),$ admits of same eigenvalues and eigenvectors as of $\mathcal{A}$ with $a=0$ given by \eqref{eigenvaluevector}. Next one can exploit the gap property in between two consecutive eigenvalues in order to apply spectral methods to prove null controllability of the system \eqref{dampedbeam} with $a=0$. For details we refer to the articles \cite{julienedward}, \cite{lasieckadampedbeam} and \cite{millerbeam}. In the present article we will further not discuss about the spectral methods and will rely on a new Carleman estimate which is the base of our analysis. Due to the strength of the Carleman parameters it is possible to handle the null controllability of a damped beam equation with a non trivial potential. Generalizing the Carleman estimate obtained in this article to dimension greater than one and to more general damped beam with general Lam\'{e} coefficients is a matter of future research. Nevertheless due to its plethora of applications (unique continuation, inverse problems etc.), Carleman estimate has its own interest.  
 \subsection{Bibliographical Comments} The well posedness of the system \eqref{dampedbeam} with $a=0$ in the framework of Hilbert space is well studied in the literature. In our case we have used the fact that the operator associated with \eqref{dampedbeam}$_{1}$ and $a=0$ is the generator of an analytic semigroup, for instance one can see Lemma \ref{lemmaexistence}. The result corresponding to the analyticity of the associated semigroup follows from \cite{chen} and \cite{chen2}. We further used this result to obtain a existence and regularity result for a damped beam equation with potential in Lemma \ref{existencepotential}. Maximal regularity in the $L^{p}-L^{q}$ regularity framework for a structurally damped beam with inhomogeneous Dirichlet-Neumann boundary condition is studied in the article \cite{denk}. The approach of \cite{denk} is mainly based on $\mathcal{R}-$ boundedness and Fourier multiplier theorems. An unified approach to the existence, uniqueness and regularity of solutions to problems belonging to a class of second order in time semilinear partial differential equations
 in Banach spaces can be found in \cite{Carvalho}. In \cite{Carvalho}, the authors study the analyticity of semigroups generated by a class of operators in the $L^{p}$ framework and obtained local existence and regularity results for some second order (wave like) semilinear problems of parabolic nature. We also refer the readers to \cite{fanli} for the existence and exponential stability issues for elastic systems with structural damping in Banach spaces. For further references regarding the well posedness issues of damped plate equation we refer to \cite{ebert} and \cite{veraar}. The readers can also consult \cite{Mitraexistence}, \cite{raymondbeam} and \cite{veiga} for the application of the regularity results of the damped Euler-Bernoulli beam equation in studying the well posedness of coupled dynamical systems and more particularly fluid structure interaction problems.\\
  
  To the best of our knowledge the present article is the first one in the literature obtaining a Carleman estimate for the adjoint of the operator \eqref{formaloperator}. Using spectral methods the null controllability of the system \eqref{dampedbeam} with $a=0$ is studied in \cite{lasieckadampedbeam}, \cite{julienedward} and \cite{millerbeam}. There exist several articles dealing with the controllability of undamped plate equation. The exact controllability problem using boundary controls of an undamped Euler-Bernoulli beam equation is considered in \cite{lasieckabeam}. In \cite{lasieckabeam} the authors prove the exact controllability result by proving an observability inequality for the homogeneous boundary value adjoint system using multiplier method. Exact controllability of a Euler-Bernoulli beam with variable coefficients with semi internal control is studied in \cite{JUKIM}. For controllability results of thin plate and beam equations one can also consult \cite{lagnese}. For the controllability issues of a coupled parabolic-hyperbolic dynamics involving an elastic structure, for instance thermoelastic systems, one can look into the articles \cite{lebeauzuazua} and \cite{avalos2}.\\  
  
  We would also like to quote the articles \cite{zhangxu} and \cite{fuxiaoyu} for the use of Carleman estimates in order to prove controllability results for plate equations. In \cite{zhangxu} the author considers the exact controllability problem of a semilinear plate equation with superlinear nonlinearity while in \cite{fuxiaoyu} the author deals with a linear plate equation with potential. In \cite{zhangxu} the author obtains a Carleman estimate by decomposing the plate operator into two Schrödinger operators while in \cite{fuxiaoyu} the author derives a Carleman estimate directly without using Schrödinger operators. We would like to point out that the Carleman weights used in \cite{zhangxu} and \cite{fuxiaoyu} completely differ from that of ours, introduced in Section \ref{w8fn}. This is because the linearized operators in \cite{zhangxu} and \cite{fuxiaoyu} are of hyperbolic nature whereas due to the structural damping the system \eqref{dampedbeam} is parabolic.\\
  
  The study of Carleman estimate for a parabolic equation involving fourth order space derivative is quite recent in the literature. The article \cite{cerpamercado} establishes the first Carleman estimate for a parabolic equation in dimension one involving fourth order derivative in space. In \cite{cerpamercado} the authors study the local exact controllability to the trajectories of the Kuramoto-Sivashinsky equation with boundary controls using Carleman estimate. For Carleman estimate and its application to the controllability of similar fourth order parabolic equations in dimension one we also refer the readers to \cite{zhouobsinq4thorder} and \cite{Carrenocerpa}. In dimension $N\geqslant 2,$ Carleman estimate for a fourth order parabolic equation is established in a very recent article \cite{GuerrKb1}. Our system \eqref{dampedbeam} is fourth order in space, second order in time and further involves a damping term $\partial_{txx}\beta$ and hence it is quite different from the models considered in \cite{GuerrKb1}, \cite{Carrenocerpa}, \cite{zhouobsinq4thorder} and \cite{cerpamercado} which are first order in time. To the best of our knowledge the present article is the first one proving a Carleman estimate for a parabolic equation which is fourth order in space and second order in time.
  \subsection{Outline} In Section \ref{Carlemanestimate} we prove Theorem \ref{Carlbeamthm}, the central result of this article and further state a Corollary \ref{corpotential} which can be readily obtained as a consequence of Theorem \ref{Carlbeamthm}. Next in Section \ref{Nullcontrollability} we prove Theorem \ref{Centraltheoremnullcont} as an application of the Carleman estimate  proved in Corollary \ref{corpotential}. In Section \ref{appendix} we include the proof of Lemma \ref{existencepotential}.   
 \section{Proof of Theorem \ref{Carlbeamthm} and a corollary}\label{Carlemanestimate}
 From now on until the end of this article we fix the controllability time $T.$\\
At this moment we can recall the definition of the weight functions $\phi$ and $\xi$ which were introduced in \ref{w8fn}. In our computations afterwards we will frequently use the following estimates, valid on ${\mathbb{T}^{d}_{L}} \times (0,T)$:
\begin{equation}\label{prilies}
\begin{array}{l}
\ds |\partial^{(i)}_{x}\phi|\leqslant C\lambda^{i}{\xi} \quad \mbox{for all}\,i\in\{1,2,3,4\},
\\
\ds |\partial_{t}\phi|\leqslant C{\xi}^{3/2},\quad 
|\partial_{tt}\phi|\leqslant C{\xi}^{2},\quad
|\partial_{tx}\phi|\leqslant C\lambda{\xi}^{3/2},\quad 
|\partial_{txx}\phi|\leqslant C\lambda^{2}{\xi}^{3/2},
\\
\ds	|\partial_{txxx}\phi|\leqslant C\lambda^{3}{\xi}^{3/2},\quad
|\partial_{ttx}\phi|\leqslant C\lambda{\xi}^{2}\quad \mbox{and}\quad|\partial_{ttxx}\phi|\leqslant C\lambda^{2}{\xi}^{2},
\end{array}
\end{equation}
and 
\begin{equation}\label{prilies*}
\begin{array}{l}
\ds |\partial^{(i)}_{x}{\xi}|\leqslant C\lambda^{i}{\xi}\quad\mbox{for all}\,\, i\in\{1,2,3,4\},\\
\ds 	|\partial_{t}{\xi}|\leqslant C{\xi}^{3/2},\quad
|\partial_{tt}{\xi}|\leqslant C{\xi}^{2},\quad
|\partial_{tx}{\xi}|\leqslant C\lambda{\xi}^{3/2},\quad
|\partial_{txx}{\xi}|\leqslant C\lambda^{2}{\xi}^{3/2}
\\
\ds 	|\partial_{txxx}{\xi}|\leqslant C\lambda^{3}{\xi}^{3/2},\quad
|\partial_{ttx}\xi|\leqslant C\lambda{\xi}^{2}\quad
\mbox{and}\quad |\partial_{ttxx}\xi|\leqslant C\lambda^{2}{\xi}^{2},
\end{array}
\end{equation}
and, for $\lambda$ large enough, for all $(x,t) \in [0,d] \times (0,T)$ and $i \in \{1, 2, 3, 4\}$, 
\begin{equation}
\label{Positivity-Weights}
-\partial^{(i)}_{x}\phi  = \partial^{(i)}_{x}\xi \geqslant c\lambda^{i}{\xi}.
\end{equation}
\subsection{Carleman estimate for an adjoint damped beam equation}\label{Carldbeam}
In the following we prove Theorem \ref{Carlbeamthm} which corresponds to the Carleman estimate for the adjoint of the damped beam equation. 
\begin{proof}[Proof of Theorem \ref{Carlbeamthm}]
	In the proof for simplicity of notations we will write:
	\begin{equation}\label{adjbeam1}
	\begin{array}{l}
	\displaystyle f_{\psi}=\partial_{tt}\psi+\partial_{txx}\psi+\partial_{xxxx}\psi.
	\end{array}
	\end{equation}
	We introduce the change of unknown
	$$w=e^{-s\phi}{\psi}.$$
	In view of \eqref{adjbeam1}, $w$ satisfies:
	\begin{equation}\label{eqw}
	\begin{split}
	e^{-s\phi}{f_{\psi}}&=e^{-s\phi}({\partial_{tt}{\psi}}+{\partial_{txx}{\psi}}+{\partial_{xxxx}{\psi}}+a\psi)\\
	&=e^{-s\phi}(\partial_{tt}(e^{s\phi}w)+\partial_{txx}(e^{s\phi}w)+\partial_{xxxx}(e^{s\phi}w))=P_{\phi}w.
	\end{split}
	\end{equation}
	We write $P_{\phi}w$ in the form:
	\begin{equation}\label{}
	\begin{array}{l}
	P_{\phi}w=P_{1}w+P_{2}w+\mathit{R}w,
	\end{array}
	\end{equation}
	where 
	\begin{equation}\label{p12r}
	\left\{ \begin{array}{ll}
	P_{1}w=&s^{4}(\partial_{x}\phi)^{4}w
	+6 s^{2}(\partial_{x}\phi)^{2}\partial_{xx}w+\partial_{xxxx}w
	+ 2 s\partial_{x}\phi\partial_{xt}w
	+\partial_{tt}w,\\
	P_{2}w=&4s^{3}(\partial_{x}\phi)^{3}\partial_{x}w+4 s\partial_{x}\phi\partial_{xxx}w+\partial_{xxt}w+ s^{2}(\partial_{x}\phi)^{2}\partial_{t}w\\
	&+6(1+\zeta)s^{3}(\partial_{x}\phi)^{2}\partial_{xx}\phi w,\\
	\mathit{R}w=& s^{2}(\partial_{t}\phi)^{2}w+s\partial_{t}\phi\partial_{xx}w
	+ s^{3}\partial_{t}\phi(\partial_{x}\phi)^{2}w+s\partial_{t}\phi\partial_{t}w+2s^{2}\partial_{t}\phi\partial_{x}\phi\partial_{x}w\\
	&+\frac{s}{2}\partial_{tt}\phi w-s\partial_{xxt}\phi w+2 s\partial_{xt}\phi\partial_{x}w+4s^{2}\partial_{x}\phi\partial_{xxx}\phi w + s\partial_{xx}\phi\partial_{t}w\\
	&+12s^{2}\partial_{x}\phi\partial_{xx}\phi\partial_{x}w+3 s^{2}(\partial_{xx}\phi)^{2}w+\frac{s}{2}\partial_{tt}\phi w+2 s^{2}\partial_{xt}\phi\partial_{x}\phi w\\
	&+s^{2}\partial_{t}\phi\partial_{xx}\phi w+6 s\partial_{xx}\phi\partial_{xx}w+ s\partial_{xxxx}\phi w + 4s\partial_{xxx}\phi\partial_{x}w\\
	&-6\zeta s^{3}(\partial_{x}\phi)^{2}\partial_{xx}\phi w,
	\end{array}\right.
	\end{equation}
	where $\zeta$ is a free parameter which will be fixed later.\\	
	Based on the identity 
	$$P_{1}w+P_{2}w={f_{\psi}}e^{-s\phi}-\mathit{R}w,$$
	we obtain
	\begin{equation}\label{trieq}
	\begin{split}
	\displaystyle
	\int_0^T \int_{\mathbb{T}_L}|P_{1}w|^{2}+\int_0^T \int_{\mathbb{T}_L}|P_{2}w|^{2}&+2\int_0^T \int_{\mathbb{T}_L} P_{1}wP_{2}w=\int_0^T \int_{\mathbb{T}_L}|{f_{\psi}}e^{-s\phi}-\mathit{R}w|^{2}\\
	&\leqslant 2\int_0^T \int_{\mathbb{T}_L} |{f_{\psi}}|^{2}e^{-2s\phi}+2\int_0^T \int_{\mathbb{T}_L}|\mathit{R}w|^{2}.
	\end{split}
	\end{equation} 
	The crucial point is to obtain suitable estimates for the product term $\displaystyle \int_0^T \int_{\mathbb{T}_L} P_{1}wP_{2}w.$ 
	We will denote by $I_{i,j}$ the cross product of the $i$-th term of $P_1 w$ and of the $j$-th term of $P_2 w$, so that 
	$$\int_0^T \int_{\mathbb{T}_L}P_{1}wP_{2}w=\sum\limits_{i,j=1}^{i=5,j=5}I_{ij}.$$
	In the following estimates to make the presentation simpler we will write L.O.T (lower order terms) for the terms which are small (for large values of the parameters $s$ and $\lambda$) with respect to the left hand side of \eqref{crlestbmthm}, i.e. for which there exists a constant $C$ independent of $s$ and $\lambda$ such that
	\begin{multline*}
	| L.O.T | 
	\leq 
	C \left(\frac{1}{s} + \frac{1}{\lambda}\right)
	\left(
	s^{7}\lambda^{8}\int_0^T \int_{\mathbb{T}_L} {\xi}^{7}|{\psi}|^{2}e^{-2s\phi}+s^{5}\lambda^{6}\int_0^T \int_{\mathbb{T}_L} {\xi}^{5}|{\partial_{x}{\psi}}|^{2}e^{-2s\phi}
	\right.\\ 
	\left.
	+s^{3}\lambda^{4}\int_0^T \int_{\mathbb{T}_L} {\xi}^{3} ( |{\partial_{xx}{\psi}}|^{2}+ |{\partial_{t}{\psi}}|^{2}) e^{-2s\phi}
	\displaystyle+s\lambda^{2}\int_0^T \int_{\mathbb{T}_L} {\xi}( |{\partial_{tx}{\psi}}|^{2}+|{\partial_{xxx}{\psi}}|^{2})e^{-2s\phi} 
	\right).
	\end{multline*} 
	In particular, note that we immediately get that 
	\begin{equation}
	\label{R-is-LOT}
	\int_0^T \int_{\mathbb{T}_L} | R w|^2 \leqslant  L.O.T.
	\end{equation}

	We list below the computations of each $I_{ij}$.
	\begin{equation}\label{I11}
	\begin{split}
	I_{11}=4s^{7}\int_0^T \int_{\mathbb{T}_L}(\partial_{x}\phi)^{7}w\partial_{x}w
	=-14s^{7}\int_0^T \int_{\mathbb{T}_L}(\partial_{x}\phi)^{6}\partial_{xx}\phi w^{2}.
	\end{split}
	\end{equation}
	\begin{equation}\label{I12}
	\begin{split}
	I_{12}&=4s^{5}\int_0^T \int_{\mathbb{T}_L}(\partial_{x}\phi)^{5}w\partial_{xxx}w
	=-120s^{5}\int_0^T \int_{\mathbb{T}_L}(\partial_{x}\phi)^{2}(\partial_{xx}\phi)^{3}w^{2}
	\\
	&\quad -80s^{5}\int_0^T \int_{\mathbb{T}_L}(\partial_{x}\phi)^{3}\partial_{xx}\phi\partial_{xxx}\phi w^{2}-40s^{5}\int_0^T \int_{\mathbb{T}_L} (\partial_{x}\phi)^{3}\partial_{xx}\phi\partial_{xxx}\phi w^{2}
	\\
	&
	\quad-10s^{5}\int_0^T \int_{\mathbb{T}_L}(\partial_{x}\phi)^{4}\partial_{xxxx}\phi w^{2}+30s^{5}\int_0^T \int_{\mathbb{T}_L}(\partial_{x}\phi)^{4}\partial_{xx}\phi(\partial_{x}w)^{2}
	\\
	&
	=L.O.T+30s^{5}\int_0^T \int_{\mathbb{T}_L}(\partial_{x}\phi)^{4}\partial_{xx}\phi(\partial_{x}w)^{2}.
	\end{split}
	\end{equation}
	\begin{equation}\label{I13}
	\begin{split}
	I_{13}&= s^{4}\int_0^T \int_{\mathbb{T}_L}(\partial_{x}\phi)^{4}w\partial_{xxt}w
	\\
	&=-12s^{4}\int_0^T \int_{\mathbb{T}_L}(\partial_{x}\phi)\partial_{xt}\phi(\partial_{xx}\phi)^{2}w^{2}-12s^{4}\int_0^T \int_{\mathbb{T}_L}(\partial_{x}\phi)^{2}\partial_{xx}\phi\partial_{xxt}\phi w^{2}\\
	&\quad-6s^{4}\int_0^T \int_{\mathbb{T}_L}(\partial_{x}\phi)^{2}\partial_{xt}\phi\partial_{xxx}\phi w^{2}
	\quad -2s^{4}\int_0^T \int_{\mathbb{T}_L} (\partial_{x}\phi)^{3}\partial_{txxx}\phi w^{2}\\
	&\quad+2s^{4}\int_0^T \int_{\mathbb{T}_L} (\partial_{x}\phi)^{3}\partial_{tx}\phi(\partial_{x}w)^{2}
	+4s^{4}\int_0^T \int_{\mathbb{T}_L}(\partial_{x}\phi)^{3}\partial_{xx}\phi\partial_{x}w\partial_{t}w\\
	&=L.O.T+4 s^{4}\int_0^T \int_{\mathbb{T}_L}(\partial_{x}\phi)^{3}\partial_{xx}\phi\partial_{x}w\partial_{t}w.
	\end{split}
	\end{equation}
	
	\begin{equation}
	\begin{array}{l}
	\displaystyle
	I_{14}=3 s^{6}\int_0^T \int_{\mathbb{T}_L}(\partial_{x}\phi)^{5}\partial_{tx}\phi w^{2}=L.O.T.
	\end{array}
	\end{equation}
	\begin{equation}
	\begin{array}{l}
	\displaystyle
	I_{15}=6(1+\zeta)s^{7}\int_0^T \int_{\mathbb{T}_L} (\partial_{x}\phi)^{6}\partial_{xx}\phi w^{2}.
	\end{array}
	\end{equation}
	\begin{equation}
	\begin{array}{l}
	\displaystyle
	I_{21}=-60s^{5}\int_0^T \int_{\mathbb{T}_L}(\partial_{x}\phi)^{4}\partial_{xx}\phi(\partial_{x}w)^{2}.
	\end{array}
	\end{equation}
	\begin{equation}
	\begin{array}{l}
	\displaystyle
	I_{22}=-36s^{3}\int_0^T \int_{\mathbb{T}_L}(\partial_{x}\phi)^{2}\partial_{xx}\phi(\partial_{xx}w)^{2}.
	\end{array}
	\end{equation}
	\begin{equation}
	\begin{array}{l}
	\displaystyle
	I_{23}=-12 s^{2}\int_0^T \int_{\mathbb{T}_L}(\partial_{x}\phi)\partial_{xt}\phi(\partial_{xx}w)^{2}=L.O.T.
	\end{array}
	\end{equation}
	
	\begin{equation}
	\begin{split}
	I_{24}&=6 s^{4}\int_0^T \int_{\mathbb{T}_L}(\partial_{x}\phi)^{4}\partial_{xx}w\partial_{t}w\\
	&=-24s^{4}\int_0^T \int_{\mathbb{T}_L}(\partial_{x}\phi)^{3}\partial_{xx}\phi\partial_{x}w\partial_{t}w+12s^{4}\int_0^T \int_{\mathbb{T}_L} (\partial_{x}\phi)^{3}\partial_{xt}\phi(\partial_{x}w)^{2}\\
	&=L.O.T-24s^{4}\int_0^T \int_{\mathbb{T}_L} (\partial_{x}\phi)^{3}\partial_{xx}\phi\partial_{x}w\partial_{t}w.
	\end{split}
	\end{equation}
	
	\begin{align}
	I_{25}&=36(1+\zeta)s^{5}\int_0^T \int_{\mathbb{T}_L}(\partial_{x}\phi)^{4}\partial_{xx}\phi w\partial_{xx}w
	= (1+\zeta)\left(216s^{5}\int_0^T \int_{\mathbb{T}_L}(\partial_{x}\phi)^{2}(\partial_{xx}\phi)^{3}w^{2}
	\right.\notag
	\\
	& \left. +144s^{5}\int_0^T \int_{\mathbb{T}_L}(\partial_{x}\phi)^{3}\partial_{xx}\phi\partial_{xxx}\phi w^{2}
	\right.
	\left. +72s^{5}\int_0^T \int_{\mathbb{T}_L} (\partial_{x}\phi)^{3}\partial_{xx}\phi\partial_{xxx}\phi w^{2}
	\right.
	\\
	&
	\left.
	+18s^{5}\int_0^T \int_{\mathbb{T}_L}(\partial_{x}\phi)^{4}\partial_{xxxx}\phi w^{2} -36s^{5}\int_0^T \int_{\mathbb{T}_L} (\partial_{x}\phi)^4\partial_{xx}\phi(\partial_{x}w)^{2}\right)\notag\\
	&=L.O.T-36(1+\zeta)s^{5}\int_0^T \int_{\mathbb{T}_L} (\partial_{x}\phi)^{4}\partial_{xx}\phi(\partial_{x}w)^{2}.\notag
	\end{align}
	\begin{equation}
	\begin{split}
	I_{31}&=4s^{3}\int_0^T \int_{\mathbb{T}_L}(\partial_{x}\phi)^{3}\partial_{x}w\partial_{xxxx}w\\
	&=-12s^{3}\int_0^T \int_{\mathbb{T}_L}(\partial_{x}\phi)^{2}\partial_{xx}\phi\partial_{x}w\partial_{xxx}w-4s^{3}\int_0^T \int_{\mathbb{T}_L}(\partial_{x}\phi)^{3}\partial_{xx}w\partial_{xxx}w\\
	&=L.O.T+18s^{3}\int_0^T \int_{\mathbb{T}_L}(\partial_{x}\phi)^{2}\partial_{xx}\phi(\partial_{xx}w)^{2}.
	\end{split}
	\end{equation}
	\begin{equation}
	\begin{array}{l}
	\displaystyle
	I_{32}=-2s\int_0^T \int_{\mathbb{T}_L}\partial_{xx}\phi(\partial_{xxx}w)^{2}.
	\end{array}
	\end{equation}
	\begin{equation}
	\begin{array}{l}
	\displaystyle
	I_{33}=\int_0^T \int_{\mathbb{T}_L}\partial_{xxxx}w\partial_{xxt}w=0.
	\end{array}
	\end{equation}
	\begin{equation}
	\begin{split}
	I_{34}&=s^{2}\int_0^T \int_{\mathbb{T}_L}(\partial_{x}\phi)^{2}\partial_{xxxx}w\partial_{t}w\\
	&=-2s^{2}\int_0^T \int_{\mathbb{T}_L}(\partial_{x}\phi)\partial_{xx}\phi\partial_{xxx}w\partial_{t}w-s^{2}\int_0^T \int_{\mathbb{T}_L} (\partial_{x}\phi)^{2}\partial_{xxx}w\partial_{tx}w\\
	&=L.O.T+4s^{2}\int_0^T \int_{\mathbb{T}_L}\partial_{x}\phi\partial_{xx}\phi\partial_{xx}w\partial_{tx}w.
	\end{split}
	\end{equation}
	
	\begin{align}
	I_{35}&=6(1+\zeta)s^{3}\int_0^T \int_{\mathbb{T}_L}(\partial_{x}\phi)^{2}\partial_{xx}\phi\partial_{xxxx}ww\notag\\
	&=-12(1+\zeta)s^{3}\int_0^T \int_{\mathbb{T}_L} \partial_{x}\phi(\partial_{xx}\phi)^{2}\partial_{xxx}ww
	-6(1+\zeta)s^{3}\int_0^T \int_{\mathbb{T}_L}(\partial_{x}\phi)^{2}\partial_{xxx}\phi\partial_{xxx}ww\notag\\
	&-6(1+\zeta)s^{3}\int_0^T \int_{\mathbb{T}_L}(\partial_{x}\phi)^{2}\partial_{xx}\phi\partial_{xxx}w\partial_{x}w\\
	&=L.O.T+6(1+\zeta)s^{3}\int_0^T \int_{\mathbb{T}_L}(\partial_{x}\phi)^{2}\partial_{xx}\phi(\partial_{xx}w)^{2}.\notag
	\end{align}
	\begin{equation}
	I_{41}=-16s^{4}\int_0^T \int_{\mathbb{T}_L}(\partial_{x}\phi)^{3}\partial_{tx}\phi(\partial_{x}w)^{2}=L.O.T.
	\end{equation}
	\begin{equation}
	\begin{split}
	I_{42}&=8s^{2}\int_0^T \int_{\mathbb{T}_L}(\partial_{x}\phi)^{2}\partial_{xt}w\partial_{xxx}w\\
	&=-16s^{2}\int_0^T \int_{\mathbb{T}_L}\partial_{x}\phi\partial_{xx}\phi\partial_{xt}w\partial_{xx}w+8s^{2}\int_0^T \int_{\mathbb{T}_L}\partial_{x}\phi\partial_{xt}\phi(\partial_{xx}w)^{2}\\
	&=L.O.T-16s^{2}\int_0^T \int_{\mathbb{T}_L}\partial_{x}\phi\partial_{xx}\phi\partial_{xt}w\partial_{xx}w.
	\end{split}
	\end{equation}
	\begin{equation}
	\begin{array}{l}
	\displaystyle
	I_{43}=-s\int_0^T \int_{\mathbb{T}_L}\partial_{xx}\phi(\partial_{xt}w)^{2}.
	\end{array}
	\end{equation}
	\begin{equation}
	\begin{array}{l}
	\displaystyle
	I_{44}=-3s^{3}\int_0^T \int_{\mathbb{T}_L}(\partial_{x}\phi)^{2}\partial_{xx}\phi(\partial_{t}w)^{2}.
	\end{array}
	\end{equation}
	
	\begin{align}
	I_{45}&=12(1+\zeta)s^{4}\int_0^T \int_{\mathbb{T}_L}(\partial_{x}\phi)^{3}\partial_{xx}\phi\partial_{xt}ww\notag\\
	&=(1+\zeta)\left(-36s^{4}\int_0^T \int_{\mathbb{T}_L}(\partial_{x}\phi)^{2}(\partial_{xx}\phi)^{2}\partial_{t}ww
	-12s^{4}\int_0^T \int_{\mathbb{T}_L}(\partial_{x}\phi)^{3}\partial_{xxx}\phi\partial_{t}ww
	\right.\notag
	\\
	&\quad
	\left.
	-12s^{4}\int_0^T \int_{\mathbb{T}_L}(\partial_{x}\phi)^{3}\partial_{xx}\phi\partial_{t}w\partial_{x}w\right)\\
	&=L.O.T-12(1+\zeta)s^{4}\int_0^T \int_{\mathbb{T}_L}(\partial_{x}\phi)^{3}\partial_{xx}\phi\partial_{t}w\partial_{x}w.\notag
	\end{align}
	\begin{equation}
	\begin{split}
	I_{51}&=4s^{3}\int_0^T \int_{\mathbb{T}_L}(\partial_{x}\phi)^{3}\partial_{x}w\partial_{tt}w\\
	&=-12s^{3}\int_0^T \int_{\mathbb{T}_L}(\partial_{x}\phi)^{2}\partial_{xt}\phi\partial_{x}w\partial_{t}w-4s^{3}\int_0^T \int_{\mathbb{T}_L}(\partial_{x}\phi)^{3}\partial_{xt}w\partial_{t}w\\
	&=L.O.T+6s^{3}\int_0^T \int_{\mathbb{T}_L}(\partial_{x}\phi)^{2}\partial_{xx}\phi(\partial_{t}w)^{2}.
	\end{split}
	\end{equation}
	
	\begin{equation}
	\begin{split}
	I_{52}&=4s\int_0^T \int_{\mathbb{T}_L}\partial_{x}\phi\partial_{xxx}w\partial_{tt}w\\
	&=-4s\int_0^T \int_{\mathbb{T}_L}\partial_{xt}\phi\partial_{xxx}w\partial_{t}w-4s\int_0^T \int_{\mathbb{T}_L}\partial_{x}\phi\partial_{xxxt}w\partial_{t}w\\
	&=L.O.T-6s\int_0^T \int_{\mathbb{T}_L}\partial_{xx}\phi(\partial_{xt}w)^{2}.
	\end{split}
	\end{equation}
	
	\begin{equation}
	\begin{array}{l}
	\displaystyle
	I_{53}=\int_0^T \int_{\mathbb{T}_L}\partial_{tt}w\partial_{xxt}w=\int_0^T \int_{\mathbb{T}_L}\partial_{t}(\partial_{tx}w)^{2}=0.
	\end{array}
	\end{equation}
	\begin{equation}
	\begin{split}
	\displaystyle
	I_{54}&=s^{2}\int_0^T \int_{\mathbb{T}_L}(\partial_{x}\phi)^{2}\partial_{t}w\partial_{tt}w\\
	&=-s^{2}\int_0^T \int_{\mathbb{T}_L}\partial_{x}\phi\partial_{tx}\phi(\partial_{t}w)^{2}=L.O.T.
	\end{split}
	\end{equation}
	
	\begin{align}
	I_{55}&=6(1+\zeta)s^{3}\int_0^T \int_{\mathbb{T}_L}(\partial_{x}\phi)^{2}\partial_{xx}\phi\partial_{tt}ww =(1+\zeta)\left(-6s^{3}\int_0^T \int_{\mathbb{T}_L}\partial_{x}\phi\partial_{tx}\phi\partial_{xx}\phi\partial_{t}(w^{2})
	\right.\nonumber
	\\
	&\left. -3s^{3}\int_0^T \int_{\mathbb{T}_L}(\partial_{x}\phi)^{2}\partial_{txx}\phi\partial_{t}(w^{2})
	-6s^{3}\int_0^T \int_{\mathbb{T}_L}(\partial_{x}\phi)^{2}\partial_{xx}\phi(\partial_{t}w)^{2}\right)\\
	&=L.O.T-6(1+\zeta)s^{3}\int_0^T \int_{\mathbb{T}_L}(\partial_{x}\phi)^{2}\partial_{xx}\phi(\partial_{t}w)^{2}.\notag
	\end{align}
	Hence we find that
	\begin{align}\label{estsum}
	&\int_0^T \int_{\mathbb{T}_L}P_{1}wP_{2}w
	=\sum\limits_{i,j=1}^{i=5,j=5}I_{ij}\notag
	\\ 
	&=(-8+6\zeta)s^{7}\int_0^T \int_{\mathbb{T}_L}(\partial_{x}\phi)^{6}\partial_{xx}\phi w^{2}
	+(-66-36\zeta)s^{5}\int_0^T \int_{\mathbb{T}_L}(\partial_{x}\phi)^{4}\partial_{xx}\phi(\partial_{x}w)^{2}\notag
	\\
	&
	+(-12+6\zeta)s^{3}\int_0^T \int_{\mathbb{T}_L}(\partial_{x}\phi)^{2}\partial_{xx}\phi(\partial_{xx}w)^{2}
	+(-3-6\zeta)s^{3}\int_0^T \int_{\mathbb{T}_L}(\partial_{x}\phi)^{2}\partial_{xx}\phi(\partial_{t}w)^{2}\notag
	\\
	&
	-2s\int_0^T \int_{\mathbb{T}_L}\partial_{xx}\phi(\partial_{xxx}w)^{2}
	-7s\int_0^T \int_{\mathbb{T}_L}\partial_{xx}\phi(\partial_{xt}w)^{2}
	\\
	&
	+ (-32-12\zeta)s^{4}\int_0^T \int_{\mathbb{T}_L}(\partial_{x}\phi)^{3}\partial_{xx}\phi\partial_{x}w\partial_{t}w
	-12s^{2}\int_0^T \int_{\mathbb{T}_L}\partial_{x}\phi\partial_{xx}\phi\partial_{xt}w\partial_{xx}w+L.O.T\notag\\
	&=\sum\limits_{n=1}^{8}E_{n}+L.O.T.\notag
	\end{align}
	Now, we adjust the parameter $\zeta$ such that all the coefficients of $E_n$ for $n \in \{1, \cdots, 6\}$ are negative and the terms $E_{7}$ and $E_{8}$ can be absorbed by using $E_{n}$ for $n\in\{1,...,6\}.$\\
	%
	In that direction we observe that, according to Young's inequality, for $\alpha_1$ and $\alpha_2$ positive, 
	\begin{align}\label{E7}
	|E_{7}|&=(32+12\zeta)s^{4}|\int_0^T \int_{\mathbb{T}_L}(\partial_{x}\phi)^{3}\partial_{xx}\phi\partial_{x}w\partial_{t}w|\notag\\
	&\leqslant\frac{(32+12\zeta)}{2\alpha_{1}}s^{5}\int_0^T \int_{\mathbb{T}_L}|(\partial_{x}\phi)^{4}\partial_{xx}\phi(\partial_{x}w)^{2}|
	+\frac{(32+12\zeta)\alpha_{1}}{2}s^{3}\int_0^T \int_{\mathbb{T}_L}|(\partial_{x}\phi)^{2}\partial_{xx}\phi(\partial_{t}w)^{2}|\notag
	\\
	& \leq \frac{(32+12\zeta)}{2\alpha_{1}|66 + 36 \zeta|} |E_2|
	+ 
	\frac{(32+12\zeta)\alpha_{1}}{2 |3 + 6 \zeta|} |E_4|
	,
	\end{align}
	and 
	\begin{equation}\label{E8}
	\begin{split}
	|E_{8}| &=12s^{2}\int_0^T \int_{\mathbb{T}_L}|\partial_{x}\phi\partial_{xx}\phi\partial_{xt}w\partial_{xx}w|
	\\
	& \leqslant \frac{12\alpha_{2}}{2}s\int_0^T \int_{\mathbb{T}_L}|\partial_{xx}\phi(\partial_{xt}w)^{2}| 
	+\frac{12}{2\alpha_{2}}s^{3}\int_0^T \int_{\mathbb{T}_L}|(\partial_{x}\phi)^{2}\partial_{xx}\phi(\partial_{xx}w)^{2}|
	\\
	& \leq \frac{12\alpha_{2}}{14} |E_6| + \frac{12}{2\alpha_{2} |12- 6 \zeta| } |E_3| ,
	\end{split}
	\end{equation}
	We then choose $\zeta,$ such that 
	\begin{equation}
	\mbox{max}\{-8+6\zeta,-66-36\zeta,-12+6\zeta,-3-6\zeta\}<0, 
	\end{equation}
	which imposes $\zeta \in (-1/2, 4/3)$, and such that there exist $\alpha_1>0$ and $\alpha_2>0$ such that
	\begin{equation}
	\label{zetachp3}
	\mbox{max} 
	\left\{
	\frac{(32+12\zeta)}{2\alpha_{1}|66 + 36 \zeta|}, 
	\frac{(32+12\zeta)\alpha_{1}}{2 |3 + 6 \zeta|}, 	
	\frac{12\alpha_{2}}{14} , 
	\frac{12}{2\alpha_{2} |12- 6 \zeta| }
	\right\}
	< 1.
	\end{equation}
	This can be done provided $\zeta \in (-1/2, 4/3)$ satisfies
	\begin{equation*}
	\frac{8+3\zeta}{33+18\zeta} <\frac{3+6\zeta}{16+6\zeta},
	\quad \hbox{and} \quad 
	\frac{1}{2-\zeta}<\frac{{7}}{6}.
	\end{equation*}
	These conditions can be easily satisfied by taking
	\begin{equation}\label{fixzeta}
	\zeta = 1.
	\end{equation}
	At this point in view of the choice \eqref{fixzeta}, we fix $\alpha_{1}$ and $\alpha_{2}$ such that they satisfy \eqref{zetachp3}.
	\\ 
	Hence from \eqref{estsum} we get that there exist positive constants $K_{1},$ $K_{2},$ $K_{3},$ $K_{4},$ $K_{5}$ and $K_{6}$ such that
	\begin{equation}\label{ultest}
	\begin{split}
	\int_0^T \int_{\mathbb{T}_L}P_{1}wP_{2}w
	& \geq -K_{1}s^{7}\int_0^T \int_{\mathbb{T}_L}(\partial_{x}\phi)^{6}\partial_{xx}\phi w^{2}-K_{2}s^{5}\int_0^T \int_{\mathbb{T}_L}(\partial_{x}\phi)^{4}\partial_{xx}\phi(\partial_{x}w)^{2}\\
	&-K_{3}s^{3}\int_0^T \int_{\mathbb{T}_L}(\partial_{x}\phi)^{2}\partial_{xx}\phi(\partial_{xx}w)^{2}
	-K_{4}s^{3}\int_0^T \int_{\mathbb{T}_L}(\partial_{x}\phi)^{2}\partial_{xx}\phi(\partial_{t}w)^{2}
	\\
	&
	-K_{5}s\int_0^T \int_{\mathbb{T}_L}\partial_{xx}\phi(\partial_{xxx}w)^{2}
	-K_{6}s\int_0^T \int_{\mathbb{T}_L}\partial_{xx}\phi(\partial_{xt}w)^{2}+L.O.T.
	\end{split}
	\end{equation}
	Hence in view of \eqref{prilies} and \eqref{Positivity-Weights} one obtains that 
	\begin{equation}\label{postvty}
	\begin{split}
	&\int_0^T \int_{\mathbb{T}_L}P_{1}wP_{2}w \geqslant c\left(
	s^{7}\lambda^{8}\int_0^T \int_{\mathbb{T}_L}{\xi}^{7} w^{2}+s^{5}\lambda^{6}\int_0^T \int_{\mathbb{T}_L}{\xi}^{5} ({\partial_{x}w})^{2}
	\right.\\
	&+s^{3}\lambda^{4}\int_0^T \int_{\mathbb{T}_L}{\xi}^{3} ({\partial_{xx}w})^{2}
	+s^{3}\lambda^{4}\int_0^T \int_{\mathbb{T}_L}{\xi}^{3} ({\partial_{t}w})^{2}
	+s\lambda^{2}\int_0^T \int_{\mathbb{T}_L}{\xi} ({\partial_{xxx}w})^{2}
	\\
	&\left.+s\lambda^{2}\int_0^T \int_{\mathbb{T}_L}{\xi} ({\partial_{xt}w})^{2} \right)
	-C\left(s^{7}\lambda^{8}\iint\limits_{\omega^{2}_{T}}{\xi}^{7} w^{2}
	+s^{5}\lambda^{6}\iint\limits_{\omega^{2}_{T}}{\xi}^{5} ({\partial_{x}w})^{2}\right. \\
	&+s^{3}\lambda^{4}\iint\limits_{\omega^{2}_{T}}{\xi}^{3} ({\partial_{xx}w})^{2}
	+s\lambda^{2}\iint\limits_{\omega^{2}_{T}}{\xi} ({\partial_{xxx}w})^{2}
	+s^{3}\lambda^{4}\iint\limits_{\omega^{2}_{T}}{\xi}^{3} ({\partial_{t}w})^{2}\\
	&\left. +s\lambda^{2}\iint\limits_{\omega^{2}_{T}}{\xi} ({\partial_{xt}w})^{2}\right), 
	\end{split}
	\end{equation}
	where 
	\begin{equation}
	\omega^2_T = (\mathbb{T}_L \setminus [- \frac{L}{2},d+\frac{L}{2}]) \times (0,T).
	\end{equation}
	%
	Now in view of \eqref{trieq} and \eqref{R-is-LOT},  \eqref{postvty}  furnishes that for large enough values of the parameter $s$ and $\lambda$ the following holds
	\begin{multline}\label{precarl}
	s^{7}\lambda^{8}\int_0^T \int_{\mathbb{T}_L}{\xi}^{7} w^{2}+s^{5}\lambda^{6}\int_0^T \int_{\mathbb{T}_L}{\xi}^{5} ({\partial_{x}w})^{2}+s^{3}\lambda^{4}\int_0^T \int_{\mathbb{T}_L}{\xi}^{3} ({\partial_{xx}w})^{2}\\
	+s^{3}\lambda^{4}\int_0^T \int_{\mathbb{T}_L}{\xi}^{3} ({\partial_{t}w})^{2}
	+s\lambda^{2}\int_0^T \int_{\mathbb{T}_L}{\xi} ({\partial_{xxx}w})^{2}
	+s\lambda^{2}\int_0^T \int_{\mathbb{T}_L}{\xi} ({\partial_{xt}w})^{2}
	\\
	\leqslant C\left(\int_0^T \int_{\mathbb{T}_L}|{f_{\psi}}|^{2}e^{-2s\phi}
	+s^{7}\lambda^{8}\iint\limits_{\omega^{2}_{T}}{\xi}^{7} w^{2}
	+s^{5}\lambda^{6}\iint\limits_{\omega^{2}_{T}}{\xi}^{5} ({\partial_{x}w})^{2}
	\right. \\
	\left.
	+s^{3}\lambda^{4}\iint\limits_{\omega^{2}_{T}}{\xi}^{3} ({\partial_{xx}w})^{2}
	+s\lambda^{2}\iint\limits_{\omega^{2}_{T}}{\xi} ({\partial_{xxx}w})^{2}
	+s^{3}\lambda^{4}\iint\limits_{\omega^{2}_{T}}{\xi}^{3} ({\partial_{t}w})^{2}
	+s\lambda^{2}\iint\limits_{\omega^{2}_{T}}{\xi} ({\partial_{xt}w})^{2}
	\right).
	\end{multline}
	Now, our goal is to estimate $\partial_{tt} w$, $\partial_{txx} w$ and $\partial_{xxxx} w$. 
	In order to do that, we set 
	\begin{equation}\label{tau}
	\tau=\frac{1}{\sqrt{s\xi}}w.
	\end{equation}
	Using \eqref{eqw}, let us observe that (since $e^{-s\phi}$ vanishes at time $T$) the new unknown $\tau$ solves the following set of equations
	\begin{equation}\label{adjbeam*}
	\left\{ \begin{array}{ll}
	\displaystyle
	{\partial_{tt}\tau} + {\partial_{txx}\tau} + {\partial_{xxxx}\tau}=\frac{1}{\sqrt{s\xi}}f_{\psi}e^{-s\phi}+(\mathcal{F}_{1}+\mathcal{F}_{2}+\mathcal{F}_{3}) -\mathcal{F}_{4}\,\,& \mbox{in}\,\, \mathbb{T}_{L}\times(0,T),\\
	\displaystyle\tau(.,T)=0,\quad{\partial_{t}\tau}(.,T)=0\,\,&\mbox{in}\,\, \mathbb{T}_{L},
	\end{array}\right.
	\end{equation}
	where 
	\begin{equation}\nonumber
	\begin{array}{llll}
	\mathcal{F}_1 &= \partial_{tt}\tau - \frac{1}{\sqrt{s\xi}}{\partial_{tt}w}, 
	\qquad 
	\mathcal{F}_2  &=\partial_{txx}\tau - \frac{1}{\sqrt{s\xi}} {\partial_{txx}w}, 
	\qquad
	\mathcal{F}_3  &=\partial_{xxxx}\tau - \frac{1}{\sqrt{s\xi}}{\partial_{xxxx}w},\\
	&=\left[\partial_{tt},\frac{1}{\sqrt{s\xi}}\right]w, & =\left[\partial_{txx},\frac{1}{\sqrt{s\xi}}\right]w, & =\left[\partial_{xxxx},\frac{1}{\sqrt{s\xi}}\right]w.\\
	\end{array}
	\end{equation}
	and $\mathcal{F}_4$ is given by 
	\begin{multline*}
	\sqrt{s \xi} \mathcal{F}_4 = R w 
	+ s^{4}(\partial_{x}\phi)^{4}w
	+6 s^{2}(\partial_{x}\phi)^{2}\partial_{xx}w
	+ 2 s\partial_{x}\phi\partial_{xt}w
	+ 4s^{3}(\partial_{x}\phi)^{3}\partial_{x}w
	\\
	+4 s\partial_{x}\phi\partial_{xxx}w 
	+  s^{2}(\partial_{x}\phi)^{2}\partial_{t}w
	+6(1+\zeta)s^{3}(\partial_{x}\phi)^{2}\partial_{xx}\phi w.
	\end{multline*}
	It is then easy to check that 
	\begin{multline*}
	\int_0^T \int_{\mathbb{T}_L}\left(|\mathcal{F}_{1}|^{2}+ |\mathcal{F}_{2}|^{2}+ |\mathcal{F}_{3}|^{2}+ |\mathcal{F}_{4}|^{2} \right)
	\\
	\leq 
	C
	\left( s^{7}\lambda^{8}\int_0^T \int_{\mathbb{T}_L}{\xi}^{7} w^{2}+s^{5}\lambda^{6}\int_0^T \int_{\mathbb{T}_L}{\xi}^{5} ({\partial_{x}w})^{2}+s^{3}\lambda^{4}\int_0^T \int_{\mathbb{T}_L}{\xi}^{3} ({\partial_{xx}w})^{2}\right.
	\\
	\left. 
	+s^{3}\lambda^{4}\int_0^T \int_{\mathbb{T}_L}{\xi}^{3} ({\partial_{t}w})^{2}
	+s\lambda^{2}\int_0^T \int_{\mathbb{T}_L}{\xi} ({\partial_{xxx}w})^{2}
	+s\lambda^{2}\int_0^T \int_{\mathbb{T}_L}{\xi} ({\partial_{xt}w})^{2}\right).
	\end{multline*}
	Hence the maximal parabolic regularity (we refer to Lemma \ref{lemmaexistence} for details) result for the system \eqref{adjbeam*} furnishes the following
	\begin{equation}\label{maxpra}
	\tau \in L^{2}(0,T;H^{4}(\mathbb{T}_{L})) \cap H^{2}(0,T; L^{2}(\mathbb{T}_{L})).
	\end{equation}
	Besides one has the following inequality
	\begin{multline}\label{energyin}
	\|\tau \|^{2}_{L^{2}(0,T;H^{4}(\mathbb{T}_{L})) \cap H^{2}(0,T; L^{2}(\mathbb{T}_{L}))}
	\leqslant C(\|{f_{\psi}}e^{-s\phi}\|^{2}_{L^{2}(\mathbb{T}_L \times (0,T))}+\|\mathcal{F}_{1}\|^{2}_{L^{2}(\mathbb{T}_L \times (0,T))}
	\\+\|\mathcal{F}_{2}\|^{2}_{L^{2}(\mathbb{T}_L \times (0,T))}
	+\|\mathcal{F}_{3}\|^{2}_{L^{2}(\mathbb{T}_L \times (0,T))}+\|\mathcal{F}_{4}\|^{2}_{L^{2}(\mathbb{T}_L \times (0,T))}).
	\end{multline}
	This then yields the following estimate:
	\begin{multline}\label{hoddr}
	\frac{1}{s}\int_0^T \int_{\mathbb{T}_L}\frac{1}{{\xi}}(|{\partial_{tt}w}|^{2}+|\partial_{txx}w|^{2}+|{\partial_{xxxx}w}|^{2})e^{-2s\phi}
	\leqslant C(\|{f_{\psi}}e^{-s\phi}\|^{2}_{L^{2}(\mathbb{T}_L \times (0,T))}+\|\mathcal{F}_{1}\|^{2}_{L^{2}(\mathbb{T}_L \times (0,T))}
	\\+\|\mathcal{F}_{2}\|^{2}_{L^{2}(\mathbb{T}_L \times (0,T))}
	+\|\mathcal{F}_{3}\|^{2}_{L^{2}(\mathbb{T}_L \times (0,T))}+\|\mathcal{F}_{4}\|^{2}_{L^{2}(\mathbb{T}_L \times (0,T))}).
	\end{multline}

	Combining the inequalities \eqref{precarl} and \eqref{hoddr} one obtains the following
	\begin{multline}\label{carlmobs}
	s^{7}\lambda^{8}\int_0^T \int_{\mathbb{T}_L}{\xi}^{7} w^{2}+s^{5}\lambda^{6}\int_0^T \int_{\mathbb{T}_L}{\xi}^{5} ({\partial_{x}w})^{2}+s^{3}\lambda^{4}\int_0^T \int_{\mathbb{T}_L}{\xi}^{3} ({\partial_{xx}w})^{2}\\
	+s\lambda^{2}\int_0^T \int_{\mathbb{T}_L}{\xi} ({\partial_{xxx}w})^{2}
	+s^{3}\lambda^{4}\int_0^T \int_{\mathbb{T}_L}{\xi}^{3} ({\partial_{t}w})^{2}
	+s\lambda^{2}\int_0^T \int_{\mathbb{T}_L}{\xi} ({\partial_{xt}w})^{2}
	\\
	+\frac{1}{s}\int_0^T \int_{\mathbb{T}_L}\frac{1}{{\xi}}(|{\partial_{tt}w}|^{2}+|\partial_{txx}w|^{2}+|{\partial_{xxxx}w}|^{2})e^{-2s\phi}
	\\
	\leqslant C\left(\int_0^T \int_{\mathbb{T}_L}|{f_{\psi}}|^{2}e^{-2s\phi}
	+s^{7}\lambda^{8}\iint\limits_{\omega^{2}_{T}}{\xi}^{7} w^{2}
	+s^{5}\lambda^{6}\iint\limits_{\omega^{2}_{T}}{\xi}^{5} ({\partial_{x}w})^{2}+s^{3}\lambda^{4}\iint\limits_{\omega^{2}_{T}}{\xi}^{3} ({\partial_{xx}w})^{2}
	\right. \\ 
	\left.
	+s^{3}\lambda^{4}\iint\limits_{\omega^{2}_{T}}{\xi}^{3} ({\partial_{t}w})^{2}
	+s\lambda^{2}\iint\limits_{\omega^{2}_{T}}{\xi} ({\partial_{xxx}w})^{2}
	+s\lambda^{2}\iint\limits_{\omega^{2}_{T}}{\xi} ({\partial_{xt}w})^{2}
	\right).
	\end{multline}
	
	Now, we need to suitably absorb the third to seventh observability terms appearing in the R.H.S of \eqref{carlmobs}. This is rather standard and such arguments can be found for instance in \cite[p. 461]{farnan} and \cite[p. 565]{ervbad}. We absorb it in a reverse way, starting from the last terms. 
	
	We introduce a smooth cut-off function $\Upsilon_2$ such that 
	\begin{multline}\label{definitionUpsilon2}
	\Upsilon_{2}\in C^{\infty}_{c}(\mathbb{T}_L; [0,1]),
	\quad
	\Upsilon_{2}(x)=1\,\mbox{in}\, \omega^{2},
	\quad
	\Upsilon_2(x) = 0\, \mbox{for} \, x \notin \omega^3, 
	\\
	\hbox{ where } \omega^2 = \mathbb{T}_L \setminus [-\frac{L}{2},d+\frac{L}{2}], 
	\hbox{ and } \omega^3 = \mathbb{T}_L \setminus [-\frac{L}{4},d+\frac{L}{4}].
	\end{multline}
	In the following, we shall also use the notation $\omega^3_T = \omega^3 \times (0,T)$.
	
	Using Young's inequality, we have, for all $\varepsilon >0$,
	\begin{align*}
	& \displaystyle s\lambda^{2}\iint\limits_{\omega^2_T}
	\xi(\partial_{xt}w)^{2}
	\displaystyle \leqslant s\lambda^{2}\iint\limits_{\omega^3_T} \Upsilon_{2}\xi(\partial_{xt}w)^2 =  s\lambda^{2}\iint\limits_{\omega^3_T} \Upsilon_{2}\xi \partial_{xx}w \partial_{tt} w + L.O.T.
	\\
	& \leq \frac{\varepsilon}{2s} \iint\limits_{\omega^3_T} \Upsilon_{2}\xi^{-1}(\partial_{tt} w)^2
	+ \frac{s^3 \lambda^4}{2 \varepsilon} \iint\limits_{\omega^3_T} \Upsilon_{2}\xi^{3}(\partial_{xx} w)^2 + L.O.T.
	\\
	& \leq \frac{\varepsilon}{2s} \iint\limits_{\omega^3_T} \xi^{-1}(\partial_{tt} w)^2
	+ \frac{s^3 \lambda^4}{2 \varepsilon} \iint\limits_{\omega^3_T} \xi^{3}(\partial_{xx} w)^2 + L.O.T.
	\end{align*}
	Similarly, we get 
	\begin{align*}
	& \displaystyle s\lambda^{2}\iint\limits_{\omega^2_T} \xi(\partial_{xxx}w)^2
	\leqslant s\lambda^{2}\iint\limits_{\omega^3_T} \Upsilon_{2}\xi(\partial_{xxx}w)^2 
	= - s \lambda^2 \iint\limits_{\omega^3_T} \Upsilon_{2}\xi\partial_{xxxx} w \partial_{xx}w + L.O.T.
	\\
	& \leq \frac{\varepsilon}{2s} \iint\limits_{\omega^3_T} \xi^{-1} (\partial_{xxxx} w)^2 + \frac{s^3 \lambda^4}{2 \varepsilon} \iint\limits_{\omega^3_T} \xi^{3} (\partial_{xx} w)^2 + L.O.T.
	\end{align*}
	We then choose $\varepsilon >0$ small enough so that $C \varepsilon <1$, where $C$ is the constant in \eqref{carlmobs}, and we plug these two estimates in \eqref{carlmobs}. We obtain that there exists a constant $C>0$ such that for all $s$ and $\lambda$ large enough, 
	\begin{multline}\label{carlmobs-2}
	s^{7}\lambda^{8}\int_0^T \int_{\mathbb{T}_L}{\xi}^{7} w^{2}+s^{5}\lambda^{6}\int_0^T \int_{\mathbb{T}_L}{\xi}^{5} ({\partial_{x}w})^{2}+s^{3}\lambda^{4}\int_0^T \int_{\mathbb{T}_L}{\xi}^{3} ({\partial_{xx}w})^{2}\\
	+s\lambda^{2}\int_0^T \int_{\mathbb{T}_L}{\xi} ({\partial_{xxx}w})^{2}
	+s^{3}\lambda^{4}\int_0^T \int_{\mathbb{T}_L}{\xi}^{3} ({\partial_{t}w})^{2}
	+s\lambda^{2}\int_0^T \int_{\mathbb{T}_L}{\xi} ({\partial_{xt}w})^{2}
	\\
	+\frac{1}{s}\int_0^T \int_{\mathbb{T}_L}\frac{1}{{\xi}}(|{\partial_{tt}w}|^{2}+|\partial_{txx}w|^{2}+|{\partial_{xxxx}w}|^{2})e^{-2s\phi}
	\leqslant C\left(\int_0^T \int_{\mathbb{T}_L}|{f_{\psi}}|^{2}e^{-2s\phi}
	\right.
	\\
	\left.
	+s^{7}\lambda^{8}\iint\limits_{\omega^{3}_{T}}{\xi}^{7} w^{2}
	+s^{5}\lambda^{6}\iint\limits_{\omega^{3}_{T}}{\xi}^{5} ({\partial_{x}w})^{2}+s^{3}\lambda^{4}\iint\limits_{\omega^{3}_{T}}{\xi}^{3} ({\partial_{xx}w})^{2}
	+s^{3}\lambda^{4}\iint\limits_{\omega^{3}_{T}}{\xi}^{3} ({\partial_{t}w})^{2}
	+L.O.T. \right).
	\end{multline}
	
	We now introduce a smooth cut-off function $\Upsilon_3$ such that 
	$$
	\Upsilon_{3}\in C^{\infty}_{c}(\mathbb{T}_L; [0,1]),
	\quad
	\Upsilon_{3}(x)=1\,\mbox{in}\, \omega^{3},
	\quad
	\Upsilon_3(x) = 0\, \mbox{in}\, [0,d], 
	$$
	and we use the notation $\omega_1 = \mathbb{T}_L\setminus[0,d]$, and $\omega_1^T = \omega_1 \times (0,T)$.
	\\
	Now, as before we can write, for $\varepsilon_1 >0$ to be fixed later,
	\begin{align*}
	s^{3}\lambda^{4}\iint\limits_{\omega^{3}_{T}}{\xi}^{3} ({\partial_{t}w})^{2}
	& \leq 
	s^{3}\lambda^{4}\iint\limits_{\omega^{1}_{T}}\Upsilon_3 {\xi}^{3} ({\partial_{t}w})^{2}
	= - s^3 \lambda^4 \iint\limits_{\omega^{1}_{T}}\Upsilon_3 {\xi}^{3} {\partial_{tt}w} w + L.O.T
	\\
	& \leq \frac{\varepsilon_1}{2s} \iint\limits_{\omega^{1}_{T}} \xi^{-1} (\partial_{tt} w)^2 
	+ \frac{s^7 \lambda^8}{2 \varepsilon_1} \iint\limits_{\omega^{1}_{T}}{\xi}^{7} w^{2} +L.O.T, 
	\end{align*}
	and 
	\begin{align*}
	s^{3}\lambda^{4}\iint\limits_{\omega^{3}_{T}}{\xi}^{3} ({\partial_{xx}w})^{2}
	& \leq 
	s^{3}\lambda^{4}\iint\limits_{\omega^{1}_{T}}\Upsilon_3 {\xi}^{3} ({\partial_{xx}w})^{2}
	=  s^3 \lambda^4 \iint\limits_{\omega^{1}_{T}}\Upsilon_3 {\xi}^{3} {\partial_{xxxx}w} w + L.O.T
	\\
	& \leq \frac{\varepsilon_1}{2s} \iint\limits_{\omega^{1}_{T}} \xi^{-1} (\partial_{xxxx} w)^2 + \frac{s^7 \lambda^8}{2 \varepsilon_1} \iint\limits_{\omega^{1}_{T}}{\xi}^{7} w^{2} +L.O.T.
	\end{align*}
	Similarly, 
	\begin{align*}
	&
	s^{5}\lambda^{6}\iint\limits_{\omega^{3}_{T}}{\xi}^{5} ({\partial_{x}w})^{2}
	\leq 
	s^{5}\lambda^{6}\iint\limits_{\omega^{1}_{T}}\Upsilon_3 {\xi}^{5} ({\partial_{x}w})^{2}
	= - s^5 \lambda^6 \iint\limits_{\omega^{1}_{T}}\Upsilon_3 {\xi}^{5} {\partial_{xx}w} w + L.O.T.
	\\
	& \leq \frac{\varepsilon_1 s^3 \lambda^4 }{2} \iint\limits_{\omega^{1}_{T}} \xi^{3} (\partial_{xx} w)^2 + \frac{s^7 \lambda^8}{2 \varepsilon_1} \iint\limits_{\omega^{1}_{T}}{\xi}^{7} w^{2} +L.O.T.
	\end{align*}
	Choosing now	$\varepsilon_1>0$ small enough so that $C \varepsilon_1 < 1$ where $C$ is the constant in \eqref{carlmobs-2}, we deduce the following inequality from \eqref{carlmobs-2}: for all $s$ and $\lambda$ large enough,
	\begin{multline}\label{carlmobs-2-bis}
	s^{7}\lambda^{8}\int_0^T \int_{\mathbb{T}_L}{\xi}^{7} w^{2}+s^{5}\lambda^{6}\int_0^T \int_{\mathbb{T}_L}{\xi}^{5} ({\partial_{x}w})^{2}+s^{3}\lambda^{4}\int_0^T \int_{\mathbb{T}_L}{\xi}^{3} ({\partial_{xx}w})^{2}\\
	+s\lambda^{2}\int_0^T \int_{\mathbb{T}_L}{\xi} ({\partial_{xxx}w})^{2}
	+s^{3}\lambda^{4}\int_0^T \int_{\mathbb{T}_L}{\xi}^{3} ({\partial_{t}w})^{2}
	+s\lambda^{2}\int_0^T \int_{\mathbb{T}_L}{\xi} ({\partial_{xt}w})^{2}
	\\
	+\frac{1}{s}\int_0^T \int_{\mathbb{T}_L}\frac{1}{{\xi}}(|{\partial_{tt}w}|^{2}+|\partial_{txx}w|^{2}+|{\partial_{xxxx}w}|^{2})e^{-2s\phi}
	\\
	\leqslant C\left(\int_0^T \int_{\mathbb{T}_L}|{f_{\psi}}|^{2}e^{-2s\phi}
	+s^{7}\lambda^{8}\iint\limits_{\omega^{1}_{T}}{\xi}^{7} w^{2}
	+L.O.T. \right).
	\end{multline}
	Now, the lower order terms $L.O.T$ can be absorbed by taking $s$ and $\lambda$ large enough, so that from \eqref{carlmobs-2-bis}, we obtain that  
	for all $s$ and $\lambda$ large enough,
	\begin{multline}\label{carlmobs**}
	s^{7}\lambda^{8}\int_0^T \int_{\mathbb{T}_L}{\xi}^{7} w^{2}+s^{5}\lambda^{6}\int_0^T \int_{\mathbb{T}_L}{\xi}^{5} ({\partial_{x}w})^{2}+s^{3}\lambda^{4}\int_0^T \int_{\mathbb{T}_L}{\xi}^{3} ({\partial_{xx}w})^{2}\\
	+s\lambda^{2}\int_0^T \int_{\mathbb{T}_L}{\xi} ({\partial_{xxx}w})^{2}
	+s^{3}\lambda^{4}\int_0^T \int_{\mathbb{T}_L}{\xi}^{3} ({\partial_{t}w})^{2}
	+s\lambda^{2}\int_0^T \int_{\mathbb{T}_L}{\xi} ({\partial_{xt}w})^{2}
	\\
	+\frac{1}{s}\int_0^T \int_{\mathbb{T}_L}\frac{1}{{\xi}}(|{\partial_{tt}w}|^{2}+|\partial_{txx}w|^{2}+|{\partial_{xxxx}w}|^{2})e^{-2s\phi}
	\\
	\leqslant C\left(\int_0^T \int_{\mathbb{T}_L}|{f_{\psi}}|^{2}e^{-2s\phi}
	+s^{7}\lambda^{8}\iint\limits_{\omega^{1}_{T}}{\xi}^{7} w^{2}
	\right).
	\end{multline}
	To obtain \eqref{crlestbmthm} from \eqref{carlmobs**} we just need to recall that $w = e^{-s\phi}\psi$, or equivalently that $\psi = w e^{s \phi}$. This argument is very standard and is left to the reader.
\end{proof}
The following corollary corresponds to a Carleman estimate with potential is a direct consequence of Theorem \ref{Carlbeamthm}.
\begin{corollary}\label{corpotential}
	Let the potential $a\in L^{\infty}({\mathbb{T}^{d}_{L}}\times(0,T)).$ There exist a constant $\lambda_{1}>1,$ independent of $\|a\|_{L^{\infty}(\mathbb{T}^{d}_{L}\times(0,T))}$and constants $C=C(\|a\|_{L^{\infty}(\mathbb{T}^{d}_{L}\times(0,T))})>0$ and $s_{1}=s_{1}(\|a\|_{L^{\infty}(\mathbb{T}^{d}_{L}\times(0,T))})> 1$ such that for all smooth functions $\psi$ on ${\mathbb{T}^{d}_{L}}\times[0,T],$ for all $s\geqslant s_{1}$ and $\lambda\geqslant\lambda_{1},$
	\begin{align}\label{crlestbmcor}
	& \displaystyle s^{7}\lambda^{8}\int_0^T \int_{{\mathbb{T}^{d}_{L}}} {\xi}^{7}|{\psi}|^{2}e^{-2s\phi}+s^{5}\lambda^{6}\int_0^T \int_{{\mathbb{T}^{d}_{L}}} {\xi}^{5}|{\partial_{x}{\psi}}|^{2}e^{-2s\phi}\notag
	\\
	&\displaystyle+s^{3}\lambda^{4}\int_0^T \int_{{\mathbb{T}^{d}_{L}}} {\xi}^{3} ( |{\partial_{xx}{\psi}}|^{2}+ |{\partial_{t}{\psi}}|^{2}) e^{-2s\phi}+s\lambda^{2}\int_0^T \int_{{\mathbb{T}^{d}_{L}}} {\xi}( |{\partial_{tx}{\psi}}|^{2}+|{\partial_{xxx}{\psi}}|^{2})e^{-2s\phi}\notag \\
	&+
	\displaystyle\frac{1}{s}\int_0^T \int_{{\mathbb{T}^{d}_{L}}} \frac{1}{{\xi}}(|{\partial_{tt}{\psi}}|^{2}+|\partial_{txx}\psi|^{2}+|{\partial_{xxxx}{\psi}}|^{2})e^{-2s\phi}
	\\
	&
	\displaystyle\leqslant  
	C\int_0^T \int_{{\mathbb{T}^{d}_{L}}} |(\partial_{tt}+\partial_{txx}+\partial_{xxxx}+a)\psi|^{2}e^{-2s\phi}
	+Cs^{7}\lambda^{8}\int_0^T \int_{\omega} {\xi}^{7}|{\psi}|^{2}e^{-2s\phi},\notag
	\end{align}
	where the notation $\omega$ was introduced in \eqref{controlzone}.
\end{corollary}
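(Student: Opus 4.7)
The plan is to derive the estimate \eqref{crlestbmcor} directly from Theorem \ref{Carlbeamthm} by absorbing the potential term using the weight $\xi$ and by taking $s$ sufficiently large in terms of $\|a\|_{L^{\infty}(\mathbb{T}^{d}_{L}\times(0,T))}$.

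First, I would apply Theorem \ref{Carlbeamthm} directly to $\psi$. This yields \eqref{crlestbmthm} with the observation term and the term $\displaystyle C \int_0^T\int_{{\mathbb{T}^{d}_{L}}} |(\partial_{tt}+\partial_{txx}+\partial_{xxxx})\psi|^2 e^{-2s\phi}$ on the right-hand side. The basic identity
\[
(\partial_{tt}+\partial_{txx}+\partial_{xxxx})\psi \;=\; (\partial_{tt}+\partial_{txx}+\partial_{xxxx}+a)\psi \;-\; a\psi,
\]
together with the elementary inequality $|A-B|^2 \leqslant 2|A|^2 + 2|B|^2$, allows me to replace this volumetric term by
\[
2C \int_0^T\!\!\int_{{\mathbb{T}^{d}_{L}}} |(\partial_{tt}+\partial_{txx}+\partial_{xxxx}+a)\psi|^2 e^{-2s\phi}
\;+\; 2C\,\|a\|_{L^{\infty}}^{2} \int_0^T\!\!\int_{{\mathbb{T}^{d}_{L}}} |\psi|^2 e^{-2s\phi}.
\]

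Second, I would absorb the extra potential contribution into the left-hand side of \eqref{crlestbmthm}. The definition \eqref{w8fn} of $\xi$ together with $\eta>0$ and $\theta(t)\geqslant 1$ on $(0,T)$ (evident from \eqref{theta}) gives the uniform lower bound
\[
\xi(x,t) \;\geqslant\; e^{4\lambda\|\eta\|_{\infty}} \;\geqslant\; 1
\qquad \text{for all } (x,t)\in {\mathbb{T}^{d}_{L}}\times(0,T),\; \lambda\geqslant 1.
\]
Consequently, the term $2C\,\|a\|_{L^{\infty}}^{2}\int|\psi|^2 e^{-2s\phi}$ is dominated by $2C\,\|a\|_{L^{\infty}}^{2}\int \xi^{7}|\psi|^2 e^{-2s\phi}$. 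Comparing with the first term of the left-hand side of \eqref{crlestbmthm}, namely $s^7\lambda^8\int \xi^7|\psi|^2 e^{-2s\phi}$, it suffices to require
\[
s^{7}\lambda^{8} \;\geqslant\; 4C\,\|a\|_{L^{\infty}(\mathbb{T}^{d}_{L}\times(0,T))}^{2},
\]
which can be ensured by setting $\lambda_{1}=\lambda_{0}$ and choosing $s_{1}=s_{1}(\|a\|_{L^{\infty}})\geqslant s_{0}$ large enough. After this absorption, the coefficient of $s^{7}\lambda^{8}\int \xi^{7}|\psi|^{2}e^{-2s\phi}$ on the left remains strictly positive, and renaming the resulting constant yields \eqref{crlestbmcor}.

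The argument contains no real obstacle: the only point to verify carefully is the uniform positivity of $\xi$, which follows immediately from the construction in Section \ref{Conw8fn}. This is exactly the well-known advantage of Carleman estimates alluded to in the introduction, namely that bounded lower-order perturbations can be absorbed by enlarging the Carleman parameter $s$.
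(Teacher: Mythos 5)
Your proposal is correct and follows essentially the same route as the paper: apply Theorem \ref{Carlbeamthm}, split off the potential by the triangle inequality, and absorb $\|a\|_{L^{\infty}}^{2}\int|\psi|^{2}e^{-2s\phi}$ into the leading term $s^{7}\lambda^{8}\int\xi^{7}|\psi|^{2}e^{-2s\phi}$ using $\xi\geqslant 1$ and $s_{1}=s_{1}(\|a\|_{L^{\infty}})$ large. If anything, your displayed inequality bounding $|(\partial_{tt}+\partial_{txx}+\partial_{xxxx})\psi|^{2}$ by $2|(\partial_{tt}+\partial_{txx}+\partial_{xxxx}+a)\psi|^{2}+2\|a\|_{L^\infty}^{2}|\psi|^{2}$ is written in the direction actually needed for the absorption, whereas the paper states the reverse inequality and leaves this step implicit.
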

\begin{proof}
	We observe that
	\begin{align*}
	\int_0^T \int_{{\mathbb{T}^{d}_{L}}} |(\partial_{tt}+\partial_{txx}+\partial_{xxxx}+a)\psi|^{2}e^{-2s\phi}\leqslant C&\left( \int_0^T \int_{{\mathbb{T}^{d}_{L}}} |(\partial_{tt}+\partial_{txx}+\partial_{xxxx})\psi|^{2}e^{-2s\phi}\right.\\
	&\left.+\|a\|^{2}_{L^{\infty}(\mathbb{T}^{d}_{L}\times(0,T))}\int_0^T \int_{{\mathbb{T}^{d}_{L}}}|\psi|^{2}e^{-2s\phi}\right)
	\end{align*}
	for some positive constant $C$ independent of $a.$ Now one can readily use the inequality \eqref{crlestbmthm} and choose $s_{1}=s_{1}(\|a\|_{L^{\infty}(\mathbb{T}^{d}_{L}\times(0,T))})$ large enough such that if $s\geqslant s_{1},$ the term $\displaystyle\|a\|^{2}_{L^{\infty}(\mathbb{T}^{d}_{L}\times(0,T))}\int_0^T \int_{{\mathbb{T}^{d}_{L}}}|\psi|^{2}e^{-2s\phi}$ can be absorbed by the term $\displaystyle s^{7}\lambda^{8}\int_0^T \int_{{\mathbb{T}^{d}_{L}}} {\xi}^{7}|{\psi}|^{2}e^{-2s\phi}$ appearing in the left hand side on \eqref{crlestbmthm}. Consequently one obtains \eqref{crlestbmcor}.
	\end{proof}
	The Corollary \ref{corpotential} will be used in next section to prove Theorem \ref{Centraltheoremnullcont}.
	\section{Proof of Theorem \ref{Centraltheoremnullcont} }\label{Nullcontrollability}
	This section is dedicated to the proof of the Theorem \ref{Centraltheoremnullcont}. The proof will be based on a duality approach. 
	\begin{proof}[Proof of Theorem \ref{Centraltheoremnullcont}] 
    	We fix the parameters $s$ and $\lambda$ in the Carleman inequality \eqref{crlestbmcor}.\\
    	 The idea is to write the control problem \eqref{dampedbeam} as the sum of a problem with inhomogeneous initial conditions and a different control problem with homogeneous initial conditions. We will only control the homogeneous initial value problem and show that the same control is sufficient to drive the state $\beta,$ the solution to \eqref{dampedbeam} to zero.\\
    	 In that direction let us first introduce a cut-off function $\theta_{1}(t)$ in time as follows:
    	 \begin{equation}\label{cutofftheta}
    	 \begin{array}{l}
    	 \theta_{1}(t)\in C^{\infty}\,\,\mbox{in a neighborhood of}\,\,(0,T)\,\, \mbox{such that},\\
    	 \theta_{1}(t)=0\,\,\mbox{in a neighborhood of}\,\, \{T\},\,\,\mbox{and}\,\,\theta_{1}(t)=1\,\,\mbox{in a neighborhood of}\,\,\{0\}.
    	 \end{array}
    	 \end{equation}
    	 We decompose $\beta,$ the solution of \eqref{dampedbeam} as follows:
    	 \begin{equation}\label{decomposebeta}
    	 \begin{array}{l}
    	 \beta(x,t)=\theta_{1}(t)q(x,t)+g(x,t),
    	 \end{array}
    	 \end{equation} 
    	 where $q$ solves 
    	 \begin{equation}\label{dampedbeamq}
    	 \left\{ \begin{array}{ll}
    	 \displaystyle\partial_{tt}q-\partial_{txx}q+\partial_{xxxx}q+aq=0 \,& \mbox{in}\, {\mathbb{T}^{d}_{L}}\times(0,T),
    	 \vspace{1.mm}\\
    	 \displaystyle q(\cdot,0)=\beta_{0}\quad \mbox{and}\quad \partial_{t} q(\cdot,0)=\beta_{1}\, &\mbox{in}\, {\mathbb{T}^{d}_{L}}
    	 \end{array}\right.
    	 \end{equation}
    	 and $g$ satisfies the following system
    	 \begin{equation}\label{dampedbeamw}
    	 \left\{ \begin{array}{ll}
    	 \displaystyle\partial_{tt}g-\partial_{txx}g+\partial_{xxxx}g+ag=v_{\beta}\chi_{\omega}+f_{\theta_{1},q} \,& \mbox{in}\, {\mathbb{T}^{d}_{L}}\times(0,T),
    	 \vspace{1.mm}\\
    	 \displaystyle g(\cdot,0)=0\quad \mbox{and}\quad \partial_{t} g(\cdot,0)=0\, &\mbox{in}\, {\mathbb{T}^{d}_{L}},
    	 \end{array}\right.
    	 \end{equation}
    	 where 
    	 \begin{equation}\label{fbetaq}
    	 \begin{array}{l}
    	 \displaystyle f_{\theta_{1},q}=-\theta_{1}''(t)q-2\theta_{1}'(t)\partial_{t}q+\theta_{1}'(t)\partial_{xx}q.
    	 \end{array}
    	 \end{equation}
    	 Since $\theta_{1}$ vanishes near $\{T\},$ we observe that the control $v_{\beta}\chi_{\omega}$ which drives $g,$ the solution of \eqref{dampedbeamw} to zero also gives the null controllability of $\beta,$ the solution to \eqref{dampedbeam}. Hence we will focus in constructing $v_{\beta}\chi_{\omega}$ such that $g$ satisfies the following null controllability requirement
    	 \begin{equation}\label{nullcontrolrequirementw}
    	 \begin{array}{l}
    	 (g,\partial_{t}g)(\cdot,T)=(0,0).
    	 \end{array}
    	 \end{equation}
	In that direction we first write the control problem \eqref{dampedbeamw} under a weak form.\\
	We multiply the equation \eqref{dampedbeamw} by smooth functions $\psi$ on $\overline{{\mathbb{T}^{d}_{L}}}\times[0,T].$ We obtain:
	\begin{equation}\label{weakformulation}
	\begin{array}{ll}
	&\displaystyle
	\int_0^T \int_{{\mathbb{T}^{d}_{L}}}g\partial_{tt}\psi-\int\limits_{{\mathbb{T}^{d}_{L}}}\partial_{t}\psi(T)g(T)+\int\limits_{{\mathbb{T}^{d}_{L}}}\psi(T)\partial_{t}g(T)
	\displaystyle +\int_0^T \int_{{\mathbb{T}^{d}_{L}}}g\partial_{txx}\psi-\int\limits_{{\mathbb{T}^{d}_{L}}}g(T)\partial_{xx}\psi(T)\\
	&\displaystyle+\int_0^T \int_{{\mathbb{T}^{d}_{L}}}g\partial_{xxxx}\psi+\int_0^T \int_{{\mathbb{T}^{d}_{L}}}a\psi
	=\int_0^T \int_{\omega}v_{\beta}\psi+\int_0^T \int_{{\mathbb{T}^{d}_{L}}}f_{\theta_{1},q}\psi.
	\end{array}
	\end{equation} 
	In view of \eqref{weakformulation}, the null controllability requirement \eqref{nullcontrolrequirementw} is satisfied if and only if the following holds for all smooth functions $\psi$ on $\overline{{\mathbb{T}^{d}_{L}}}\times[0,T]:$ 
	\begin{equation}\label{weakformulationaftercontrol}
	\begin{array}{ll}
	&\displaystyle\int_0^T \int_{{\mathbb{T}^{d}_{L}}}g\partial_{tt}\psi+\int_0^T \int_{{\mathbb{T}^{d}_{L}}}g\partial_{txx}\psi+\int_0^T \int_{{\mathbb{T}^{d}_{L}}}g\partial_{xxxx}\psi+\int_0^T \int_{{\mathbb{T}^{d}_{L}}}a\psi\\
	&\displaystyle=\int_0^T \int_{\omega}v_{\beta}\psi+\int_0^T \int_{{\mathbb{T}^{d}_{L}}}f_{\theta_{1},q}\psi.
	\end{array}
	\end{equation}
	The trick now is to introduce a functional $J$ whose Euler Lagrange equation coincide with \eqref{weakformulationaftercontrol}: For smooth functions $\psi$ on $\overline{{\mathbb{T}^{d}_{L}}}\times[0,T],$ we define
	\begin{equation}\label{thefunctioanl}
	\begin{array}{ll}
	\displaystyle \displaystyle J(\psi)&\displaystyle=\frac{1}{2}\int_0^T \int_{{\mathbb{T}^{d}_{L}}}|(\partial_{tt}+\partial_{txx}+\partial_{xxxx}+a)\psi|^{2}e^{-2s\phi}+\frac{s^{7}\lambda^{8}}{2}\int_0^T \int_{{\mathbb{T}^{d}_{L}}}\chi_{\omega}^{2}\xi^{7}|\psi|^{2}e^{-2s\phi}\\
	&\displaystyle-\int_0^T \int_{{\mathbb{T}^{d}_{L}}}f_{\theta_{1},q}\psi.
	\end{array}
	\end{equation} 
	But the set of smooth functions on $\overline{\mathbb{T}^{d}_{L}}\times[0,T]$ is not a Banach space. This leads us to define
	$$H_{obs}=\overline{\{\psi\in C^{\infty}({\mathbb{T}^{d}_{L}}\times[0,T])\}}^{\|\cdot\|_{obs}},$$
	where the over line refers to the completion with respect to the Hilbert norm $\|\cdot\|_{obs}$ defined by
	 \begin{equation}\label{Hilbertnorm}
	 \begin{array}{l}
	 \displaystyle\|\psi\|^{2}_{obs}=\int_0^T \int_{{\mathbb{T}^{d}_{L}}}|(\partial_{tt}+\partial_{txx}+\partial_{xxxx}+a)\psi|^{2}e^{-2s\phi}+{s^{7}\lambda^{8}}\int_0^T \int_{{\mathbb{T}^{d}_{L}}}\chi_{\omega}^{2}\xi^{7}|\psi|^{2}e^{-2s\phi}.
	 \end{array}
	 \end{equation} 
	In view of the Carleman estimate \eqref{crlestbmcor}, we conclude that $\|\cdot\|_{obs}$ defines a norm indeed.\\
   Let us show that $J(\psi)$ can be extended as a continuous function on $H_{obs}.$\\ 
  First of all we observe that since  $f_{\theta_{1},q}$ vanishes in a neighborhood of $0$ and $T$ and the parameters $s$ and $\lambda$ are fixed, hence from \eqref{fbetaq} one furnishes
   \begin{equation}\label{boundftheta1q}
   \begin{array}{l}
   \displaystyle\int_0^T \int_{{\mathbb{T}^{d}_{L}}}|f_{\theta_{1},q}|^{2}e^{2s\phi}\leqslant C\|(q,\partial_{t}q)\|^{2}_{L^{2}(0,T;H^{2}({\mathbb{T}^{d}_{L}}))\times L^{2}(0,T;L^{2}({\mathbb{T}^{d}_{L}}))}\leqslant C\|(\beta_{0},\beta_{1})\|^{2}_{H^{3}({\mathbb{T}^{d}_{L}})\times H^{1}({\mathbb{T}^{d}_{L}})},
   \end{array}
   \end{equation}
   for some positive constant $C.$ The second inequality of \eqref{boundftheta1q} follows by using Lemma \ref{existencepotential} (with $v_{\beta}=0$) in view of the initial regularity assumption \eqref{regularityinitial}.\\
	We observe that for a smooth function $\psi$ on $\overline{\mathbb{T}^{d}_{L}}\times[0,T]$ the following holds as a consequence of the Carleman estimate \eqref{crlestbmcor}
	\begin{equation}\label{Holder}
	\begin{array}{l}
	\displaystyle\int_0^T \int_{{\mathbb{T}^{d}_{L}}}f_{\theta_{1},q}\psi\leqslant C\|\psi\|_{obs}\left(\int_0^T \int_{{\mathbb{T}^{d}_{L}}}|f_{\theta_{1},q}|^{2}e^{2s\phi}\right)^{1/2}\leqslant C\|\psi\|_{obs}\|(\beta_{0},\beta_{1})\|_{H^{3}({\mathbb{T}^{d}_{L}})\times H^{1}({\mathbb{T}^{d}_{L}})},
	\end{array}
	\end{equation}
	for some positive constant $C$. The second inequality of \eqref{Holder} follows from \eqref{boundftheta1q}.\\
	In view of \eqref{Holder} the following map
	\begin{equation}\label{contextension}
	\begin{array}{l}
	\displaystyle\psi\longmapsto \int_{{\mathbb{T}^{d}_{L}}}f_{\theta_{1},q}\psi, 
	\end{array}
	\end{equation}
	admits of a continuous extension on the space $H_{obs}.$ This further implies our claim, $i.e$ $J(\psi)$ can be extended as a continuous function on $H_{obs}.$\\
	Now we claim that $J(\psi)$ on $H_{obs}$ is coercive. 
 In view of the definition \eqref{thefunctioanl} of $J(\psi)$ and the inequality \eqref{Holder}, one furnishes the following 
	\begin{equation}\label{coercivitystp1}
	\begin{array}{ll}
	\displaystyle J(\psi)\geqslant &\displaystyle\frac{1}{2}\int_0^T \int_{{\mathbb{T}^{d}_{L}}}|(\partial_{tt}+\partial_{txx}+\partial_{xxxx}+a )\psi|^{2}e^{-2s\phi}+\frac{s^{7}\lambda^{8}}{2}\int_0^T \int_{{\mathbb{T}^{d}_{L}}}\chi_{\omega}^{2}\xi^{7}|\psi|^{2}e^{-2s\phi}\\[3.mm]
	& \displaystyle -C\|\psi\|_{obs}\|(\beta_{0},\beta_{1})\|_{H^{3}({\mathbb{T}^{d}_{L}})\times H^{1}({\mathbb{T}^{d}_{L}})}\\[3.mm]
	&\displaystyle
	\geqslant\frac{1}{2}\int_0^T \int_{{\mathbb{T}^{d}_{L}}}|(\partial_{tt}+\partial_{txx}+\partial_{xxxx}+a )\psi|^{2}e^{-2s\phi}+\frac{s^{7}\lambda^{8}}{2}\int_0^T \int_{{\mathbb{T}^{d}_{L}}}\chi_{\omega}^{2}\xi^{7}|\psi|^{2}e^{-2s\phi}\\[3.mm]
	&\displaystyle -C\left(\frac{\epsilon}{2}\|\psi\|_{obs}+\frac{1}{2\epsilon}\|(\beta_{0},\beta_{1})\|_{H^{3}({\mathbb{T}^{d}_{L}}\times H^{1}({\mathbb{T}^{d}_{L}})}\right)
	\end{array}
	\end{equation} 
	for some positive constant $C$ and a positive parameter $\epsilon.$ Choosing $\epsilon$ to be sufficiently small and making use of the definition \eqref{Hilbertnorm} of $\|\cdot\|_{obs},$ one furnishes from \eqref{coercivitystp1} that $J(\psi)\rightarrow +\infty$ as $\|\psi\|_{obs}\rightarrow +\infty.$ This furnishes the coercivity of $J(\psi)$ on $H_{obs}.$\\
	On the other hand it is easy to verify that $J(\psi)$ is convex.\\
	So far, we have seen that $J(\psi)$ is convex and coercive on $H_{obs}.$ Therefore it admits of a unique minimizer $\psi_{\min}$ on $H_{obs}.$ Let us set 
	\begin{equation}\label{expressionsolutioncontrol}
	\begin{array}{l}
	\displaystyle\widetilde{g}=e^{-2s\phi}(\partial_{tt}+\partial_{txx}+\partial_{xxxx}+a)\psi_{\min},\quad {v}_{\widetilde\beta}=-s^{7}\lambda^{8}\xi^{7}\chi_{\omega}\psi_{\min}e^{-2s\phi}.
	\end{array}
	\end{equation}
	Now we write the Euler Lagrange equation of $J$ at $\psi_{min},$ for all smooth function $\psi$ on $\overline{{\mathbb{T}^{d}_{L}}}\times[0,T]$
	 \begin{equation}\label{EulerLagrange}
	 \begin{array}{l}
	 \displaystyle\int_0^T \int_{{\mathbb{T}^{d}_{L}}}\widetilde{g}(\partial_{tt}+\partial_{txx}+\partial_{xxxx}+a)\psi-\int_0^T \int_{\omega}v_{\widetilde\beta}\psi-\int_0^T \int_{{\mathbb{T}^{d}_{L}}}f_{\theta_{1},q}\psi=0,
	 \end{array}
	 \end{equation}
	 which coincides with \eqref{weakformulationaftercontrol}.\\
	 In particular, \eqref{EulerLagrange} holds for all smooth functions $\psi$ on $\overline{{\mathbb{T}^{d}_{L}}}\times[0,T]$ with $(\psi,\partial_{t}\psi)(\cdot,T)=0,$ which implies that $\widetilde{g}$ with $v_{\beta}=v_{\widetilde{\beta}}$ solves \eqref{dampedbeamw} in the sense of transposition. Hence comparing \eqref{EulerLagrange} and \eqref{weakformulationaftercontrol} and using the uniqueness of transposition solution we have shown that there exists a control $v_{\widetilde{\beta}}$ which drives the solution of the system \eqref{dampedbeamw} to the null state at time $T.$ \\
	 Now we aim to show that the control function $v_{\beta}=v_{\widetilde{\beta}}\in L^{2}(0,T;L^{2}(\mathbb{T}^{d}_{L})).$ In that direction we first observe that
	 $$J(\psi_{\min})\leqslant J(0)=0.$$
	 This gives
	 \begin{equation}\label{usingminimization}
	 \begin{array}{ll}
	 &\displaystyle\frac{1}{2}\|\psi\|^{2}_{obs}=\frac{1}{2}\int_0^T \int_{{\mathbb{T}^{d}_{L}}}|(\partial_{tt}+\partial_{txx}+\partial_{xxxx}+a)\psi_{\min}|^{2}e^{-2s\phi}+\frac{s^{7}\lambda^{8}}{2}\int_0^T \int_{{\mathbb{T}^{d}_{L}}}\chi_{\omega}^{2}\xi^{7}|\psi_{\min}|^{2}e^{-2s\phi}\\
	 &\displaystyle\leqslant \int_0^T \int_{{\mathbb{T}^{d}_{L}}}f_{\theta_{1},q}\psi_{\min}\\
	 &\displaystyle \leqslant C\|\psi_{\min}\|_{obs}\|(\beta_{0},\beta_{1})\|_{H^{3}({\mathbb{T}^{d}_{L}})\times H^{1}({\mathbb{T}^{d}_{L}})}\\
	 & \displaystyle \leqslant C\left(\frac{\epsilon}{2}\|\psi_{\min}\|^{2}_{obs}+\frac{1}{2\epsilon}\|(\beta_{0},\beta_{1})\|^{2}_{H^{3}({\mathbb{T}^{d}_{L}})\times H^{1}({\mathbb{T}^{d}_{L}})}\right).
	 \end{array}
	 \end{equation} 
	 for some positive constant $C$ and a positive parameter $\epsilon.$ The expression \eqref{usingminimization}$_{3}$ from \eqref{usingminimization}$_{2}$ is obtained since the map \eqref{contextension} admits of a continuous extension on $H_{obs}$ defined by \eqref{Holder}.\\
	 Choosing small enough value of the positive parameter $\epsilon,$ one obtains the following from \eqref{usingminimization}
	 \begin{equation}\label{boundcontrol}
	 \begin{array}{l}
	 \displaystyle{s^{7}\lambda^{8}}\int_0^T \int_{{\mathbb{T}^{d}_{L}}}\chi_{\omega}^{2}\xi^{7}|\psi_{\min}|^{2}e^{-2s\phi}\leqslant C\|(\beta_{0},\beta_{1})\|^{2}_{H^{3}({\mathbb{T}^{d}_{L}})\times H^{1}({\mathbb{T}^{d}_{L}})},
	 \end{array}
	 \end{equation}
	 for some positive constant $C.$\\
	 Using the fact that
	 $$\xi^{7}e^{-2s\phi}<C\quad\mbox{on}\quad {\mathbb{T}^{d}_{L}}\times (0,T),$$
	 for some positive constant $C,$ and the estimate \eqref{boundcontrol}, one establishes the following bound on the control function $v_{\widetilde{\beta}},$ defined in \eqref{expressionsolutioncontrol}
	 \begin{equation}\label{finalboundcontrol}
	 \begin{array}{l}
	 \displaystyle\|v_{\widetilde{\beta}}\|_{L^{2}(0,T;L^{2}({\mathbb{T}^{d}_{L}}))}\leqslant C\|(\beta_{0},\beta_{1})\|_{H^{3}({\mathbb{T}^{d}_{L}})\times H^{1}({\mathbb{T}^{d}_{L}})},
	 \end{array}
	 \end{equation}
	 for some positive constant $C.$ This proves our claim.\\
	   In view of the decomposition \eqref{decomposebeta} we conclude that the system \eqref{dampedbeam} is null controllable and there exists a control $v_{\beta}\in L^{2}(0,T;L^{2}(\mathbb{T}^{d}_{L}))$ which drives the solution of \eqref{dampedbeam} to the zero state. Finally using the regularity result from Lemma \ref{existencepotential} we conclude that the controlled trajectory $\beta$ satisfies the regularity \eqref{regularitytrajectory}. 
\end{proof}
\appendix
\section{Proof of Lemma \ref{existencepotential}}\label{appendix}
\begin{proof}[Proof of Lemma \ref{existencepotential}]
	The proof of Lemma \ref{existencepotential} will be a consequence of the following result on the analyticity of a damped beam semigroup:
	\begin{lem}\label{lemmaexistence}
		Let $$(\beta_{0},\beta_{1})\in H^{3}({\mathbb{T}^{d}_{L}})\times H^{1}({\mathbb{T}^{d}_{L}})$$ and $f\in L^{2}(0,{\kappa};L^{2}({\mathbb{T}^{d}_{L}})).$
		Then the following system 
		\begin{equation}\label{dampedbeamgeneral}
		\left\{ \begin{array}{ll}
		\displaystyle\partial_{tt}\beta-\partial_{txx}\beta+\partial_{xxxx}\beta=f \,& \mbox{in}\, {\mathbb{T}^{d}_{L}}\times(0,{\kappa}),
		\vspace{1.mm}\\
		\displaystyle\beta(\cdot,0)=\beta_{0}\quad \mbox{and}\quad \partial_{t}\beta(\cdot,0)=\beta_{1}\, &\mbox{in}\, {\mathbb{T}^{d}_{L}},
		\end{array}\right.
		\end{equation}
		admits a unique solution in the following functional framework
		\begin{equation}\label{functionalframework}
		\begin{array}{l}
		\beta\in L^{2}(0,{\kappa};H^{4}({\mathbb{T}^{d}_{L}}))\cap H^{1}(0,{\kappa};H^{2}({\mathbb{T}^{d}_{L}}))\cap H^{2}(0,{\kappa};L^{2}({\mathbb{T}^{d}_{L}})).
		\end{array}
		\end{equation}
		Let us fix a positive constant $\overline{{\kappa}}>{\kappa}>0.$ There exists a positive constant $C=C(\overline{{\kappa}})>0,$ independent of ${\kappa},$ such that the following holds
		\begin{equation}\label{inequalityexistence}
		\begin{array}{ll}
		&\displaystyle\|(\beta,\partial_{t}\beta)\|_{L^{2}(0,{\kappa};H^{4}({\mathbb{T}^{d}_{L}})\times H^{2}({\mathbb{T}^{d}_{L}}))\cap H^{1}(0,{\kappa};H^{2}({\mathbb{T}^{d}_{L}})\times L^{2}({\mathbb{T}^{d}_{L}}))}+\|(\beta,\partial_{t}\beta)\|_{L^{\infty}(0,{\kappa};H^{3}(\mathbb{T}^{d}_{L})\times H^{1}(\mathbb{T}^{d}_{L}))}\\
		&\leqslant\displaystyle C(\|(\beta_{0},\beta_{1})\|_{H^{3}({\mathbb{T}^{d}_{L}})\times H^{1}({\mathbb{T}^{d}_{L}})}
		\displaystyle+\|f\|_{L^{2}(0,{\kappa};L^{2}({\mathbb{T}^{d}_{L}}))}).
		\end{array}
		\end{equation}
	\end{lem}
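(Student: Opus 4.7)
The plan is to recast the second-order beam equation as a first-order abstract Cauchy problem and then invoke the theory of analytic semigroups together with maximal $L^2$-regularity on Hilbert spaces. Setting $U = (\beta, \partial_t \beta)^T$, $U_0 = (\beta_0, \beta_1)^T$, and $F = (0,f)^T$, the system \eqref{dampedbeamgeneral} is equivalent to the abstract problem
\[
U'(t) = \mathcal{A} U(t) + F(t), \qquad U(0) = U_0,
\]
where $\mathcal{A}$ is the operator introduced in \eqref{operatorA}, acting on the Hilbert space $X = H^{2}({\mathbb{T}^{d}_{L}}) \times L^{2}({\mathbb{T}^{d}_{L}})$ with domain $D(\mathcal{A}) = H^{4}({\mathbb{T}^{d}_{L}}) \times H^{2}({\mathbb{T}^{d}_{L}})$. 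As already recalled in the introduction, $\mathcal{A}$ generates an analytic semigroup on $X$ (see \cite{chen,chen2}).

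Next I would apply the maximal $L^2$-regularity theorem of de Simon for generators of analytic semigroups on Hilbert spaces: for every $F \in L^{2}(0,\overline{\kappa}; X)$ and every $U_0 \in (X, D(\mathcal{A}))_{1/2,2}$, the unique mild solution satisfies $U \in L^{2}(0,\overline{\kappa}; D(\mathcal{A})) \cap H^{1}(0,\overline{\kappa}; X)$ together with the estimate
\[
\|U\|_{L^{2}(0,\overline{\kappa}; D(\mathcal{A}))} + \|U\|_{H^{1}(0,\overline{\kappa}; X)} \leqslant C(\overline{\kappa}) \bigl( \|U_0\|_{(X,D(\mathcal{A}))_{1/2,2}} + \|F\|_{L^{2}(0,\overline{\kappa}; X)} \bigr).
\]
Since ${\mathbb{T}^{d}_{L}}$ is a torus and the operator is diagonalized by the Fourier basis (see the explicit eigenstructure in \eqref{eigenvaluevector}), the real interpolation space can be computed explicitly as $(X, D(\mathcal{A}))_{1/2,2} = H^{3}({\mathbb{T}^{d}_{L}}) \times H^{1}({\mathbb{T}^{d}_{L}})$, so the initial-data assumption $(\beta_0,\beta_1) \in H^3 \times H^1$ is exactly the trace space at $t=0$.

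Unpacking the first of these memberships yields $\beta \in L^{2}(0,\overline{\kappa}; H^{4}({\mathbb{T}^{d}_{L}}))$ and $\partial_t \beta \in L^{2}(0,\overline{\kappa}; H^{2}({\mathbb{T}^{d}_{L}}))$, while the second gives $\beta \in H^{1}(0,\overline{\kappa}; H^{2}({\mathbb{T}^{d}_{L}}))$ and $\partial_t \beta \in H^{1}(0,\overline{\kappa}; L^{2}({\mathbb{T}^{d}_{L}}))$, the latter amounting to $\beta \in H^{2}(0,\overline{\kappa}; L^{2}({\mathbb{T}^{d}_{L}}))$. This is precisely the functional framework \eqref{functionalframework}. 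The $L^\infty$-in-time bound in \eqref{inequalityexistence} then follows from the standard continuous embedding
\[
L^{2}(0,\overline{\kappa}; D(\mathcal{A})) \cap H^{1}(0,\overline{\kappa}; X) \hookrightarrow C\bigl([0,\overline{\kappa}]; (X,D(\mathcal{A}))_{1/2,2}\bigr) = C\bigl([0,\overline{\kappa}]; H^{3}({\mathbb{T}^{d}_{L}}) \times H^{1}({\mathbb{T}^{d}_{L}})\bigr).
\]
To obtain the $\kappa$-independence of the constant for all $\kappa \in (0,\overline{\kappa})$, I would extend $f$ by zero outside $(0,\kappa)$, apply the above estimate on the fixed interval $(0,\overline{\kappa})$, and then restrict back to $(0,\kappa)$.

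The only real technical point is the identification $(X, D(\mathcal{A}))_{1/2,2} = H^{3}({\mathbb{T}^{d}_{L}}) \times H^{1}({\mathbb{T}^{d}_{L}})$ and its compatibility with the trace at $t=0$: this is the one place where one has to make sure the spaces on both sides of the estimate actually match. On the torus it reduces to a routine Fourier-multiplier computation, since both $\partial_{xxxx}$ and $\partial_{xx}$ are diagonalized simultaneously in the Fourier basis and the norms involved are weighted $\ell^2$ sums of the Fourier coefficients. Once this identification is in place, Lemma \ref{lemmaexistence} is simply the specialization of the abstract maximal-regularity estimate to the concrete spaces appearing in \eqref{inequalityexistence}.
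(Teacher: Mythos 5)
Your proposal is correct and follows essentially the same route as the paper: both recast \eqref{dampedbeamgeneral} as a first-order Cauchy problem for the operator $\mathcal{A}$ of \eqref{operatorA}, invoke the analyticity of the semigroup from \cite{chen}, apply maximal $L^{2}$-regularity on the Hilbert space $H^{2}({\mathbb{T}^{d}_{L}})\times L^{2}({\mathbb{T}^{d}_{L}})$ (the paper via the isomorphism theorem of \cite{ben}, you via de Simon's theorem with the explicit identification of the trace space $(X,D(\mathcal{A}))_{1/2,2}=H^{3}\times H^{1}$), and obtain the $\kappa$-independence of the constant by extending $f$ by zero to $(0,\overline{\kappa})$. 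The only cosmetic difference is that you derive the $L^{\infty}(0,\kappa;H^{3}\times H^{1})$ bound from the standard trace embedding of the maximal-regularity space, whereas the paper appeals to an a priori estimate in the spirit of \cite{Mitraexistence}; both are valid.
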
	
	\begin{proof}
		We write \eqref{dampedbeamgeneral} in the following form:
		\begin{equation}\label{matrixform}
		\left\{ \begin{array}{ll}
		\partial_{t}\begin{pmatrix}
		\beta\\
		\partial_{t}\beta
		\end{pmatrix}-\begin{pmatrix}
		0 & I\\
		-\partial_{xxxx} & \partial_{xx}
		\end{pmatrix}\begin{pmatrix}
		\beta\\\partial_{t}\beta
		\end{pmatrix}=\begin{pmatrix}
		\beta\\
		\partial_{t}\beta
		\end{pmatrix} & \quad\mbox{in}\,\,{\mathbb{T}^{d}_{L}}\times(0,{\kappa}),\\
		\beta(\cdot,0)=\beta_{0}\quad\mbox{and}\quad \partial_{t}\beta(\cdot,0)=\beta_{1}&\quad\mbox{in}\,\,{\mathbb{T}^{d}_{L}}.
		\end{array}\right.
		\end{equation}
		Since we are on a one dimensional torus ${\mathbb{T}^{d}_{L}},$ it is easy to see that the operator
		$$\mathcal{A}=\begin{pmatrix}
		0 & I\\
		-\partial_{xxxx} & \partial_{xx}
		\end{pmatrix}$$
		is defined in $H^{2}({\mathbb{T}^{d}_{L}})\times L^{2}({\mathbb{T}^{d}_{L}})$ with the domain
		$$\mathcal{D}(\mathcal{A})=H^{4}({\mathbb{T}^{d}_{L}})\times H^{2}({\mathbb{T}^{d}_{L}}).$$ 
		Further it follows from \cite{chen} that the operator $(\mathcal{A},\mathcal{D}(\mathcal{A}))$ generates an analytic semigroup on $H^{2}({\mathbb{T}^{d}_{L}})\times L^{2}({\mathbb{T}^{d}_{L}}).$\\
		Hence one can apply the isomorphism theorem \cite[Theorem 3.1, p. 143]{ben} to obtain \eqref{functionalframework}. In order to furnish the inequality \eqref{inequalityexistence} with a constant $C=C(\overline{{\kappa}}),$ independent of ${\kappa},$ one can use the technique from the proof of \cite[Theorem 2.7]{Mitraexistence} which involves in extending the non homogeneous term $f$ by zero in a time interval $({\kappa},\overline{{\kappa}})$ and solving \eqref{dampedbeamgeneral} in $(0,\overline{{\kappa}}).$ In particular the bound on $\|(\beta,\partial_{t}\beta)\|_{L^{\infty}(0,{\kappa};H^{3}(\mathbb{T}^{d}_{L})\times H^{1}(\mathbb{T}^{d}_{L}))}$ in \eqref{inequalityexistence} can be obtained by a priori estimate in the spirit of \cite[Eq. 2.36]{Mitraexistence}.
	\end{proof}
	\begin{remark}
		It might seem surprising to include an estimate of $\|(\beta,\partial_{t}\beta)\|_{L^{\infty}(0,{\kappa};H^{3}(\mathbb{T}^{d}_{L})\times H^{1}(\mathbb{T}^{d}_{L}))}$ in \eqref{inequalityexistence}, since it can be obtained by interpolation from the estimate of\\ $\|(\beta,\partial_{t}\beta)\|_{L^{2}(0,{\kappa};H^{4}({\mathbb{T}^{d}_{L}})\times H^{2}({\mathbb{T}^{d}_{L}}))\cap H^{1}(0,{\kappa};H^{2}({\mathbb{T}^{d}_{L}})
			\times L^{2}({\mathbb{T}^{d}_{L}}))}.$ Using interpolation argument might yield a constant depending on $\kappa$ but for our purpose of obtaining a existence result we need a bound on\\ $\|(\beta,\partial_{t}\beta)\|_{L^{\infty}(0,{\kappa};H^{3}(\mathbb{T}^{d}_{L})\times H^{1}(\mathbb{T}^{d}_{L}))}$ where the constant $C$ must be independent of $\kappa.$ This is the reason why we separate the estimate of $\|(\beta,\partial_{t}\beta)\|_{L^{\infty}(0,{\kappa};H^{3}(\mathbb{T}^{d}_{L})\times H^{1}(\mathbb{T}^{d}_{L}))}$ in \eqref{inequalityexistence}. 
	\end{remark}
		Now based on Lemma \ref{lemmaexistence}, we will prove Lemma \ref{existencepotential} in two steps $i.e$ (1) a local in time existence result and (2) an iteration argument.\\
		$Step\,\,1.$ Local in time existence: This step is based on a fixed point argument which is performed in a sufficiently small time interval.\\
		Let $\widehat{\beta}\in L^{\infty}(0,{\kappa};L^{2}(\mathbb{T}^{d}_{L})).$ Let us consider
		\begin{equation}\label{dampedbeamgeneral3}
		\left\{ \begin{array}{ll}
		\displaystyle\partial_{tt}\beta-\partial_{txx}\beta+\partial_{xxxx}\beta=-a\widehat{\beta}+v_{\beta}\chi_{\omega} \,& \mbox{in}\, {\mathbb{T}^{d}_{L}}\times(0,{\kappa}),
		\vspace{1.mm}\\
		\displaystyle\beta(\cdot,0)=\beta_{0}\quad \mbox{and}\quad \partial_{t}\beta(\cdot,0)=\beta_{1}\, &\mbox{in}\, {\mathbb{T}^{d}_{L}}.
		\end{array}\right.
		\end{equation} 
		From Lemma \ref{lemmaexistence} we know that the problem \eqref{dampedbeamgeneral3} admits of a unique solution in the functional framework \eqref{functionalframework} and using \eqref{boundvbeta} the solution satisfies the following bound
		\begin{equation}\label{inequalityexistence3}
		\begin{array}{ll}
		&\displaystyle\|(\beta,\partial_{t}\beta)\|_{L^{2}(0,{\kappa};H^{4}({\mathbb{T}^{d}_{L}})\times H^{2}({\mathbb{T}^{d}_{L}}))\cap H^{1}(0,{\kappa};H^{2}({\mathbb{T}^{d}_{L}})\times L^{2}({\mathbb{T}^{d}_{L}}))}+\|(\beta,\partial_{t}\beta)\|_{L^{\infty}(0,{\kappa};H^{3}(\mathbb{T}^{d}_{L})\times H^{1}(\mathbb{T}^{d}_{L}))}\\
		&\leqslant\displaystyle C(\|(\beta_{0},\beta_{1})\|_{H^{3}({\mathbb{T}^{d}_{L}})\times H^{1}({\mathbb{T}^{d}_{L}})}
		\displaystyle+\|a\|_{L^{\infty}(\mathbb{T}^{d}_{L}\times(0,{\kappa}))}\|\widehat{\beta}\|_{L^{2}(0,{\kappa};L^{2}({\mathbb{T}^{d}_{L}}))}),
		\end{array}
		\end{equation}
		for some positive constant $C$ independent of ${\kappa}.$ Further in view of the inequality
		$$\|\widehat{\beta}\|_{L^{2}(0,{\kappa};L^{2}({\mathbb{T}^{d}_{L}}))}\leqslant {{\kappa}^{1/2}}\|\widehat{\beta}\|_{L^{\infty}(0,{\kappa};L^{2}({\mathbb{T}^{d}_{L}}))},$$
		one furnishes the following 
		\begin{equation}\label{inequalityexistence4}
		\begin{array}{ll}
		&\displaystyle\|(\beta,\partial_{t}\beta)\|_{L^{2}(0,{\kappa};H^{4}({\mathbb{T}^{d}_{L}})\times H^{2}({\mathbb{T}^{d}_{L}}))\cap H^{1}(0,{\kappa};H^{2}({\mathbb{T}^{d}_{L}})\times L^{2}({\mathbb{T}^{d}_{L}}))}+\|(\beta,\partial_{t}\beta)\|_{L^{\infty}(0,{\kappa};H^{3}(\mathbb{T}^{d}_{L})\times H^{1}(\mathbb{T}^{d}_{L}))}\\
		&\leqslant\displaystyle C(\|(\beta_{0},\beta_{1})\|_{H^{3}({\mathbb{T}^{d}_{L}})\times H^{1}({\mathbb{T}^{d}_{L}})}
		\displaystyle+{{\kappa}}^{1/2}\|a\|_{L^{\infty}(\mathbb{T}^{d}_{L}\times(0,{\kappa}))}\|\widehat{\beta}\|_{L^{\infty}(0,{\kappa};L^{2}({\mathbb{T}^{d}_{L}}))}),
		\end{array}
		\end{equation}
		for some positive constant $C$ independent of $\kappa.$ We will solve the system \eqref{dampedbeam} by proving that the map $\widehat{\beta}\longrightarrow\beta$ from $L^{\infty}(0,{\kappa};L^{2}({\mathbb{T}^{d}_{L}}))$ to itself is a contraction for a sufficiently small time ${\kappa}.$ In that direction let us consider $\widehat{\beta}_{i}$ and  $\widehat{\beta}_{j}$ in the space $L^{\infty}(0,{\kappa};L^{2}({\mathbb{T}^{d}_{L}})).$ Let $\beta_{i}$ and $\beta_{j}$ be the solutions of the problem \eqref{dampedbeamgeneral3} corresponding to the potentials $a\widehat{\beta}_{i}$ and $a\widehat{\beta}_{j}$ respectively. Using the linearity it is easy to observe that $(\beta_{i}-\beta_{j})$ solves system \eqref{dampedbeamgeneral3} with the potential term $a(\widehat{\beta}_{i}-\widehat{\beta}_{j})$ and initial condition $$((\beta_{i}-\beta_{j}),\partial_{t}(\beta_{i}-\beta_{j}))(\cdot,0)=(0,0).$$
		Using \eqref{inequalityexistence4} for $(\beta_{i}-\beta_{j})$ one in particular furnishes the following
		\begin{equation}\label{inequalityexistence5}
		\begin{array}{l}
		\displaystyle\|(\beta_{i}-\beta_{j})\|_{L^{\infty}(0,{\kappa};H^{3}(\mathbb{T}^{d}_{L}))}
		\leqslant\displaystyle C{{\kappa}}^{1/2}\|a\|_{L^{\infty}(\mathbb{T}^{d}_{L}\times(0,{\kappa}))}\|(\widehat{\beta}_{i}-\widehat{\beta}_{j})\|_{L^{\infty}(0,{\kappa};L^{2}({\mathbb{T}^{d}_{L}}))},
		\end{array}
		\end{equation}
		for some positive constant $C$ independent of $\kappa.$ In view of \eqref{inequalityexistence5} we can readily conclude that there exists  $$\kappa^{*}<\frac{1}{C^{2}\|a\|^{2}_{L^{\infty}(\mathbb{T}^{d}_{L}\times(0,T))}},$$ 
		where $C$ is the constant appearing in \eqref{inequalityexistence3} and \eqref{inequalityexistence5} (one can observe that $C$ is the same constant in both of these inequalities), such that the map $\widehat{\beta}\longrightarrow\beta$ from $L^{\infty}(0,{\kappa}^{*};L^{2}({\mathbb{T}^{d}_{L}}))$ to itself is a contraction. Hence by Banach fixed point theorem there exists a unique solution $\beta$ of \eqref{dampedbeam} in the time interval $(0,\kappa^{*})$ and further the choice of $\kappa^{*}$ and the inequality  \eqref{inequalityexistence4} together furnish that
		\begin{equation}\label{inequalityexistence6}
		\begin{array}{ll}
		&\displaystyle\|(\beta,\partial_{t}\beta)\|_{L^{2}(0,{\kappa^{*}};H^{4}({\mathbb{T}^{d}_{L}})\times H^{2}({\mathbb{T}^{d}_{L}}))\cap H^{1}(0,{\kappa^{*}};H^{2}({\mathbb{T}^{d}_{L}})\times L^{2}({\mathbb{T}^{d}_{L}}))}+\|(\beta,\partial_{t}\beta)\|_{L^{\infty}(0,{\kappa^{*}};H^{3}(\mathbb{T}^{d}_{L})\times H^{1}(\mathbb{T}^{d}_{L}))}\\
		&\displaystyle	\leqslant\displaystyle C\|(\beta_{0},\beta_{1})\|_{H^{3}({\mathbb{T}^{d}_{L}})\times H^{1}({\mathbb{T}^{d}_{L}})},
		\end{array}
		\end{equation} 
		for some positive constant $C,$ independent of $\kappa^{*}.$\\
		$Step\,\,2$ Iteration: Using interpolation and the regularity of $\beta$ in $(0,\kappa^{*})$ one obtains that $$(\beta,\partial_{t}\beta)(\cdot,\frac{\kappa^{*}}{2})\in H^{3}(\mathbb{T}^{d}_{L})\times H^{1}(\mathbb{T}^{d}_{L}),$$
		and further from \eqref{inequalityexistence6} one obtains the following
		\begin{equation}\label{boundk*}
		\begin{array}{ll}
		\|(\beta,\partial_{t}\beta)(\cdot,\frac{\kappa^{*}}{2})\|_{H^{3}(\mathbb{T}^{d}_{L})\times H^{1}(\mathbb{T}^{d}_{L})}\leqslant C\|(\beta_{0},\beta_{1})\|_{H^{3}({\mathbb{T}^{d}_{L}})\times H^{1}({\mathbb{T}^{d}_{L}})},
		\end{array}
		\end{equation}
		for some positive constant $C$ independent of $\kappa^{*}.$
		Since the constant $C$ in inequality \eqref{inequalityexistence6} does not depend on the final time $\kappa^{*}$ and the local in time existence result proved in $Step\,\,1,$ is independent of the size of the given data $(\beta_{0},\beta_{1}),$ hence we can once again solve \eqref{dampedbeam} in $(\frac{\kappa^{*}}{2},\frac{3\kappa^{*}}{2})$ with datum $(\beta,\partial_{t}\beta)(\cdot,\frac{\kappa^{*}}{2})$ and obtain 
		\begin{equation}\label{inequalityexistence7}
		\begin{array}{ll}
		&\displaystyle\|(\beta,\partial_{t}\beta)\|_{L^{2}(0,{\kappa^{*}};H^{4}({\mathbb{T}^{d}_{L}})\times H^{2}({\mathbb{T}^{d}_{L}}))\cap H^{1}(0,{\kappa^{*}};H^{2}({\mathbb{T}^{d}_{L}})\times L^{2}({\mathbb{T}^{d}_{L}}))}+\|(\beta,\partial_{t}\beta)\|_{L^{\infty}(0,{\kappa^{*}};H^{3}(\mathbb{T}^{d}_{L})\times H^{1}(\mathbb{T}^{d}_{L}))}\\
		&\displaystyle	\leqslant\displaystyle C\|(\beta,\partial_{t}\beta)(\cdot,\frac{\kappa^{*}}{2})\|_{H^{3}(\mathbb{T}^{d}_{L})\times H^{1}(\mathbb{T}^{d}_{L}}\leqslant C\|(\beta_{0},\beta_{1})\|_{H^{3}({\mathbb{T}^{d}_{L}})\times H^{1}({\mathbb{T}^{d}_{L}})},
		\end{array}
		\end{equation} 
		for some positive constant $C$ independent of $\kappa^{*}.$ In the last line of \eqref{inequalityexistence7} we have used \eqref{boundk*}. One can iterate this argument finitely many times to show that the system \eqref{dampedbeam} renders a unique solution in the time interval $(0,\kappa)$ and the inequality \eqref{inequalityexistence2} holds in the time interval $(0,\kappa)$. Since $\kappa$ is arbitrary we can have the existence result in time interval $(0,T).$ Hence we are done with the proof of Lemma \ref{existencepotential}.
	\end{proof}
 \bibliographystyle{plain}
 \bibliography{bibliography}

 \end{document}